\documentclass[11pt]{amsart}
\usepackage{amsmath,amssymb,amsthm}
\usepackage[margin=1in]{geometry}
\usepackage{hyperref}
\hypersetup{colorlinks=true,linkcolor=blue,citecolor=blue,urlcolor=blue}

\newtheorem{theorem}{Theorem}[section]
\newtheorem{proposition}[theorem]{Proposition}
\newtheorem{lemma}[theorem]{Lemma}
\newtheorem{corollary}[theorem]{Corollary}
\newtheorem{definition}[theorem]{Definition}
\newtheorem{remark}[theorem]{Remark}
\newtheorem{example}[theorem]{Example}
\newtheorem{question}[theorem]{Question}

\newcommand{\Zstar}{Z(R)^{*}}
\newcommand{\Ann}{\operatorname{Ann}}
\newcommand{\supp}{\operatorname{supp}}

\newcommand{\GR}{\Gamma(R)}
\newcommand{\WGR}{W\Gamma(R)}
\newcommand{\DR}{\mathcal{D}(R)}

\title[The Weak Zero-Divisor Difference Graph]{The Weak Zero-Divisor Difference Graph of a Finite Commutative Ring}

\author{Bilal Ahmad Wani}
\address{Department of Mathematics, National Institute of Technology Srinagar, 190006, India}
\email{bilalwani@nitsri.ac.in}

\date{\today}

\begin{document}

\begin{abstract}
For a finite commutative ring $R$, let $\GR$ denote its zero-divisor graph and $\WGR$ its weakly zero-divisor graph, the latter containing the former as a spanning subgraph. We introduce the \emph{weak zero-divisor difference graph} $\DR:=\WGR-\GR$ and develop a complete structural theory for finite reduced rings $R\cong\mathbb F_{q_1}\times\cdots\times\mathbb F_{q_t}$. We show $\DR$ sits strictly between $\GR$ and $\WGR$ in a three-stage refinement that also contains Badawi's annihilator graph, and prove that distinct support classes $X_A,X_B$ are completely joined in $\DR$ if and only if $A\cap B\ne\emptyset$ -- an exact criterion underlying every result that follows. Consequently $\DR$ is edgeless for $t\le2$ but connected with diameter $2$ and girth $3$ for every $t\ge3$, independently of the field orders. We establish a complete perfectness dichotomy -- $\DR$ is perfect exactly when $t\in\{3,4\}$, with an elementary combinatorial proof at $t=4$, and never perfect for $t\ge5$ -- and determine its clique number exactly at $t=3$ and $t=4$; for general $t$ we give two incomparable lower bounds and two upper bounds, sharp at $t=3$ but not beyond, together with a compression argument showing an extremal family may always be taken shifted without this alone resolving the problem. A reconstruction theorem shows $\DR$ recovers the multiset of field orders intrinsically, so $\DR\cong\mathcal D(S)$ forces $R\cong S$. We further give closed forms, valid for every $t\ge3$, for the degree sequence and minimum degree, the domination number, and the independence and vertex cover numbers. Finally, we briefly indicate, via the valuation structure of finite chain rings, why the reduced-ring hypothesis cannot simply be dropped.

\vspace{0.5cm}
\noindent \textbf{2020 Mathematics Subject Classification:} 05C25, 05C17, 05C69, 05D05, 13A99.\\
\textbf{Key words:} zero-divisor graph, weakly zero-divisor graph, annihilator graph, weak zero-divisor difference graph, perfect graph, clique number, chromatic number, domination number, independence number, finite reduced ring, intersecting family.

\end{abstract}

\maketitle

\section{Introduction}

Let $R$ be a finite commutative ring with identity $1\ne 0$, and let $Z(R)$ denote its set of zero-divisors, $\Zstar = Z(R)\setminus\{0\}$. The \emph{zero-divisor graph} $\GR$, with vertex set $\Zstar$ and distinct vertices $x,y$ adjacent iff $xy=0$, was introduced by Beck \cite{Beck1988} and refined to its now-standard form by Anderson and Livingston \cite{AndersonLivingston1999}.

Among the many variants of $\GR$, Nikmehr, Azadi and Nikandish \cite{NikmehrAzadiNikandish2021} introduced the \emph{weakly zero-divisor graph} $\WGR$: the same vertex set $\Zstar$, with distinct $x,y$ adjacent iff there exist \emph{nonzero} $r\in\Ann(x)$ and $s\in\Ann(y)$ with $rs=0$. Since $x\sim y$ in $\GR$ forces $x\in\Ann(y)\setminus\{0\}$, $y\in\Ann(x)\setminus\{0\}$ with $xy=0$, one has $E(\GR)\subseteq E(\WGR)$. A substantial literature has since computed spectral and index invariants of $\WGR$ \cite{ShariqMathilKumar2023,RehmanAlaliMirNazim2023}, culminating in Shylla, Mawiong and Kharbhih's proof \cite{ShyllaMawiongKharbhih2026} that $\WGR$ is a complete multipartite graph for \emph{every} finite commutative ring. Since the structure of $\WGR$ is now well understood, we turn instead to what separates it from $\GR$:

\begin{definition}
The \emph{weak zero-divisor difference graph} $\DR$ is the graph with edge set $E(\WGR)\setminus E(\GR)$ on the vertex set $\Zstar$, with resulting isolated vertices deleted.
\end{definition}

Since $\GR\subseteq\WGR$ always, $\DR$ is well-defined, and it records precisely the pairs of zero-divisors related through annihilator interaction but not through direct multiplication. It is natural to ask how $\DR$ sits relative to the existing family of zero-divisor-type graphs. The closest such construction is Badawi's \emph{annihilator graph} $AG(R)$ \cite{Badawi2014}, with $x\sim y$ iff $\mathrm{Ann}(xy)\ne\mathrm{Ann}(x)\cup\mathrm{Ann}(y)$; it is known that $\GR\subseteq AG(R)$, with equality precisely when $R$ is reduced with two minimal primes \cite{Badawi2014}, a striking parallel to Theorem~\ref{thm:B} below. We show in Proposition~\ref{prop:hierarchy} that in fact
\[
\GR \;\subseteq\; AG(R) \;\subseteq\; \WGR
\]
for every finite reduced ring, with both containments generally strict, so that $\DR$ properly contains $AG(R)\setminus\GR$ as a subgraph rather than coinciding with it (see example~\ref{ex:hierarchy}).

This decomposition already suggests why $\DR$ is worth isolating, rather than treating $\GR$, $AG(R)$, and $\WGR$ merely as three points on a single spectrum. First, $\DR$ picks out a precise algebraic phenomenon: $x\sim_{\DR}y$ says that $\Ann(x)$ and $\Ann(y)$ interact indirectly through some auxiliary pair $r,s$, while $x$ and $y$ themselves do \emph{not} multiply to zero, a relation with no counterpart in $\GR$ and not captured by $AG(R)$ either, since $\DR\supsetneq AG(R)\setminus\GR$. Second, $\WGR$ is complete multipartite for \emph{every} finite commutative ring \cite{ShyllaMawiongKharbhih2026}, so its clique number, chromatic number, and perfectness follow immediately from that single uniform fact; $\DR$ carries none of this triviality; it exhibits a genuine perfectness dichotomy and a clique number that, for $t\ge5$, resists reduction to any single closed-form extremal construction, so the difference construction reveals structure that is entirely invisible in $\WGR$ itself. Third, this added complexity carries real content: the reconstruction theorem shows that $\DR$ by itself, with no reference back to $\GR$, $AG(R)$, or $\WGR$, recovers the field orders $q_1,\dots,q_t$ and hence $R$ itself -- so passing from $\WGR$ to the difference graph loses no algebraic information.

The paper is organised as follows. Section~\ref{sec:prelim} recalls the graph-theoretic vocabulary and the ring-theoretic structure theory used throughout. Section~\ref{sec:support} gives an exact adjacency criterion for $\DR$, which simplifies to a clean rule for distinct support classes: $X_A,X_B$ are completely joined iff $A\cap B\ne\emptyset$; as an immediate consequence we place $\DR$ within the containment hierarchy $\GR\subseteq AG(R)\subseteq\WGR$ described above. Section~\ref{sec:basic} derives the basic structural invariants for $t\ge3$: $\DR$ is edgeless iff $t\le2$; for $t\ge3$ it has no isolated vertices, is connected with diameter exactly $2$ and girth exactly $3$; and its clique number is determined exactly at $t=3$. Section~\ref{sec:perfect} proves a complete perfectness dichotomy, i.e., $\DR$ is perfect iff $t\in\{3,4\}$, with an elementary combinatorial proof at $t=4$ and an explicit induced $5$-cycle ruling out perfection for every $t\ge5$. Section~\ref{sec:general-clique} studies the clique number for general $t$: a lower bound valid for all $t$ is shown by explicit counterexample not to be tight, the exact value is pinned down at $t=4$, a second incomparable lower bound is given for $t\ge5$, and matching upper bounds are established, sharp at $t=3$ but leaving the general case open. Section~\ref{sec:degree} determines three further parameters exactly for every $t\ge3$: the degree of every vertex, together with a closed form for the minimum degree; the domination number, shown to equal $2$; and the independence and vertex cover numbers, computed in closed form as a natural counterpart to the clique-number results of Section~\ref{sec:general-clique}. Section~\ref{sec:shifting} shows an extremal family may always be taken shifted, and explains why this does not by itself force a star. 
\section{Preliminaries}\label{sec:prelim}

We collect here the graph-theoretic notions used throughout the paper, together with the structure theory of finite reduced rings underlying the combinatorial description of $\DR$ developed in Section~\ref{sec:support}. General references for the graph-theoretic material are \cite{Lovasz1972,ChudnovskyRobertsonSeymourThomas2006,FranklTokushige2016}.

\subsection{Graph-theoretic notation}

All graphs considered are finite and simple. For a graph $G$ with vertex set $V(G)$ and edge set $E(G)$, we write $x\sim y$ (or $xy\in E(G)$) to indicate that $x$ and $y$ are adjacent. For $W\subseteq V(G)$, the \emph{induced subgraph} $G[W]$ has vertex set $W$ and retains exactly those edges of $G$ with both endpoints in $W$. A graph $H$ is a \emph{spanning subgraph} of $G$ if $V(H)=V(G)$ and $E(H)\subseteq E(G)$. Two graphs $G,G'$ are \emph{isomorphic} if there is a bijection $V(G)\to V(G')$ preserving adjacency in both directions.

A subset $W\subseteq V(G)$ is \emph{independent} if $G[W]$ has no edges, and a \emph{clique} if $G[W]$ is complete. We write $K_n$ for the complete graph on $n$ vertices and $\overline{K_n}$ for the edgeless graph on $n$ vertices. More generally, the \emph{complete multipartite graph} $K_{n_1,\dots,n_k}$ has its vertex set partitioned into independent sets of sizes $n_1,\dots,n_k$, with two vertices adjacent if and only if they lie in different parts. The \emph{complement} $G^c$ of $G$ has the same vertex set, with $xy\in E(G^c)$ if and only if $xy\notin E(G)$.

A \emph{matching} in $G$ is a set of pairwise vertex-disjoint edges; a matching is \emph{perfect} if it covers every vertex of $G$. A vertex $v$ is \emph{isolated} if $N(v)=\emptyset$. The graph $G$ is \emph{connected} if every two vertices are joined by a path in $G$; the \emph{distance} $\operatorname{dist}(u,v)$ is the length of a shortest $u$--$v$ path, and the \emph{diameter} is $\operatorname{diam}(G)=\max_{u,v\in V(G)}\operatorname{dist}(u,v)$. The \emph{girth} of $G$ is the length of a shortest cycle in $G$; a $3$-cycle is a \emph{triangle}.The \emph{cycle graph} $C_\ell$, for $\ell\ge3$, has vertices $v_1,\dots,v_\ell$ arranged in a cycle, with each $v_i$ joined to $v_{i+1}$ and $v_1$ joined back to $v_\ell$. A cycle in $G$ is \emph{induced} if it has no chords i.e.\ the corresponding subgraph is isomorphic to $C_\ell$ as an induced subgraph, and an induced cycle of length $\ell\ge5$ is a \emph{hole}.

For $v\in V(G)$, the \emph{open neighborhood} is $N(v)=\{u\in V(G):uv\in E(G)\}$ and the \emph{closed neighborhood} is $N[v]=N(v)\cup\{v\}$; the \emph{degree} of $v$ is $\deg_G(v)=|N(v)|$, and the \emph{minimum} and \emph{maximum degree} of $G$ are $\delta(G)=\min_{v}\deg_G(v)$ and $\Delta(G)=\max_v\deg_G(v)$ respectively. Two vertices $u,v$ are \emph{twins} if $N(u)=N(v)$ or $N[u]=N[v]$; the equivalence classes of this relation are the \emph{twin classes} of $G$. Since twins share identical neighborhoods outside their own class,if $u,v$ are twins and $w\notin\{u,v\}$, then $w\sim u$ iff $w\sim v$, directly from $N(u)=N(v)$ or $N[u]=N[v]$, adjacency between twin classes is well defined, giving the \emph{twin-class quotient graph}: one vertex per twin class, two classes adjacent if and only if every vertex of one is adjacent to every vertex of the other.

A set $S\subseteq V(G)$ is a \emph{dominating set} if $N[S]:=\bigcup_{v\in S}N[v]=V(G)$; the \emph{domination number} $\gamma(G)$ is the minimum size of a dominating set. A set $C\subseteq V(G)$ is a \emph{vertex cover} if every edge of $G$ has an endpoint in $C$; the \emph{vertex cover number} $\beta(G)$ is the minimum size of a vertex cover. Since $S$ is independent if and only if $V(G)\setminus S$ is a vertex cover, taking complements of extremal sets gives the standard identity $\alpha(G)+\beta(G)=|V(G)|$, where $\alpha(G)$, the \emph{independence number}, is the maximum size of an independent set.

The \emph{clique number} $\omega(G)$ is the maximum size of a clique in $G$; a clique is \emph{maximal} if it is not properly contained in a larger one. The \emph{chromatic number} $\chi(G)$ is the least number of colors in a proper vertex coloring of $G$, i.e.\ a coloring in which adjacent vertices always receive distinct colors. Always $\chi(G)\ge\omega(G)$: a graph $G$ is \emph{weakly perfect} if equality holds, $\chi(G)=\omega(G)$, and \emph{perfect} if this equality holds not just for $G$ itself but for \emph{every} induced subgraph $H$ of $G$. 

\begin{theorem}[Strong Perfect Graph Theorem \cite{ChudnovskyRobertsonSeymourThomas2006}]\label{thm:spgt}
A graph $G$ is perfect if and only if neither $G$ nor $G^c$ contains an induced cycle of odd length $\ell\ge5$.
\end{theorem}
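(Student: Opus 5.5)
This statement is the Strong Perfect Graph Theorem, and the paper only needs it as a cited tool, so in practice I would invoke \cite{ChudnovskyRobertsonSeymourThomas2006} directly. If asked to outline a proof, I would split it into the easy direction and the deep direction.

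\emph{Necessity.} I would first show that an odd hole $C_\ell$ with $\ell\ge5$ is imperfect. It has $\omega(C_\ell)=2$, but it is not bipartite, so $\chi(C_\ell)=3$. Its complement, the odd antihole $C_\ell^c$, is also imperfect. It has $\omega(C_\ell^c)=\alpha(C_\ell)=(\ell-1)/2$. A colour class in $C_\ell^c$ is a clique of $C_\ell$, so it has size at most $2$, which forces $\chi(C_\ell^c)\ge\lceil \ell/2\rceil=(\ell+1)/2$. Perfection is inherited by induced subgraphs, so a perfect $G$ contains no odd hole. To rule out odd antiholes in $G$, equivalently odd holes in $G^c$, I would invoke Lov\'asz's Weak Perfect Graph Theorem \cite{Lovasz1972}: $G$ is perfect if and only if $G^c$ is perfect. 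That theorem follows from the replication lemma together with the characterisation $\omega(H)\alpha(H)\ge|V(H)|$ for all induced subgraphs $H$.

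\emph{Sufficiency.} Call $G$ \emph{Berge} if neither $G$ nor $G^c$ contains an odd hole. Following Chudnovsky--Robertson--Seymour--Thomas, I would argue by contradiction using a minimum imperfect Berge graph $G$. The first step is to prove the structural decomposition theorem. It says that every Berge graph either belongs to one of five basic classes, or admits one of a short list of decompositions. The basic classes are bipartite graphs, their complements, line graphs of bipartite graphs, their complements, and double split graphs. The decompositions are a proper $2$-join (in $G$ or in $G^c$), a proper homogeneous pair, or a balanced skew partition. The second step is to check that every basic graph is perfect; for line graphs of bipartite graphs this uses K\H{o}nig's edge-colouring theorem. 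The third step is to show that a minimum imperfect graph admits none of these decompositions. For $2$-joins and homogeneous pairs this goes by gluing optimal colourings of the smaller pieces. For balanced skew partitions I would rely on the known results that minimal imperfect graphs have no star cutsets (Chv\'atal) and, more strongly, no balanced skew partitions.

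\emph{Main obstacle.} The real difficulty is the decomposition theorem itself. Its proof is a long case analysis on which special substructures a Berge graph contains: prisms, line graphs of bipartite subdivisions of $K_4$, wheels, and so on. This is far beyond the scope of this paper, so the theorem is used here purely as a black box. In later sections it will be applied to rule out perfectness of $\DR$ by exhibiting an induced $C_5$, and to establish perfectness at $t\in\{3,4\}$, where the paper promises an elementary argument instead.
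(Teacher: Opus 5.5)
Your proposal matches the paper, which gives no proof of its own and invokes \cite{ChudnovskyRobertsonSeymourThomas2006} as a black box, exactly as you do; your supplementary sketch is accurate as an outline. Two small refinements. First, in the necessity direction the appeal to the Weak Perfect Graph Theorem is unnecessary: your computation $\chi(C_\ell^c)>\omega(C_\ell^c)$ together with heredity already excludes odd holes in $G^c$, since $G[W]=(G^c[W])^c$. Second, the absence of balanced skew partitions in a minimum imperfect Berge graph is one of the new results proved in the CRST paper itself, not an earlier known result you could rely on.
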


\begin{theorem}[Weak Perfect Graph Theorem \cite{Lovasz1972}]\label{thm:wpgt}
A graph $G$ is perfect if and only if its complement $G^c$ is perfect.
\end{theorem}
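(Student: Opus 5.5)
The quickest route is through Theorem~\ref{thm:spgt}, which is stated just above and which I will take as given. The criterion there is symmetric under complementation, and the plan is simply to make that symmetry explicit. The one fact needed is that complementation commutes with taking induced subgraphs: for every $W\subseteq V(G)$ we have $(G[W])^c=G^c[W]$, since both graphs have vertex set $W$ and join $x,y\in W$ exactly when $xy\notin E(G)$. Together with $(G^c)^c=G$, this shows that ``$G^c$ contains an induced $C_\ell$'' means the same as ``$G$ contains an induced $C_\ell^c$''.

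Now apply Theorem~\ref{thm:spgt} to $G^c$. It says $G^c$ is perfect if and only if neither $G^c$ nor $(G^c)^c=G$ contains an induced odd cycle of length $\ell\ge5$. This is word for word the condition that Theorem~\ref{thm:spgt} gives for $G$ itself, with the two clauses swapped. Hence $G$ is perfect if and only if $G^c$ is perfect. There is no real obstacle on this route; the only care needed is the bookkeeping identity $(G[W])^c=G^c[W]$ above.

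This argument is historically backwards, because Lovász's theorem predates the Strong Perfect Graph Theorem. For a self-contained argument I would instead follow Lovász's original route in two steps.

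First, the \emph{replication lemma}: if $G$ is perfect, then replacing a vertex $v$ by a clique of new vertices, all with the same neighbourhood as $v$, again gives a perfect graph. The proof is by induction on $|V|$, using a colour class that avoids the new clique, or one that contains $v$, to drop the chromatic number along with the clique number.

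Second, I would show that in a perfect graph $G$ every induced subgraph $H$ contains an independent set meeting every maximum clique of $H$; this is the expected main obstacle. I would argue by contradiction. Suppose every independent set $S$ of $H$ misses some maximum clique $K_S$. Replicate each vertex $x$ by as many copies as there are sets $S$ with $x\in K_S$. Then a double count of the vertices of the replicated graph against its clique number and independence number contradicts its perfectness, and that graph is perfect by the replication lemma.

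Granting this, every induced subgraph of $G^c$ has a clique meeting all of its maximum independent sets, so deleting that clique lowers the independence number. Iterating gives $\chi(H^c)=\omega(H^c)$ for every induced subgraph $H$ of $G$, so $G^c$ is perfect. The converse direction follows by symmetry, since $(G^c)^c=G$.
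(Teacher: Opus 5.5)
The paper does not prove this statement. Like Theorem~\ref{thm:spgt}, it is quoted from Lov\'asz.

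Your first route is a valid deduction from Theorem~\ref{thm:spgt} as stated. Applying it to $G^c$ and using $(G^c)^c=G$ already gives the identical condition, so you do not even need $(G[W])^c=G^c[W]$. However, the route is not merely historically backwards; it is logically dependent. The published proof of Theorem~\ref{thm:spgt} itself uses Lov\'asz's theorem, for instance to know that the complement of a minimum imperfect Berge graph is again imperfect. So this route checks consistency between two imported results rather than giving an independent proof.

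Your self-contained route has a genuine error: it swaps the roles of cliques and independent sets in the key lemma. The statement you call the main obstacle is that every induced subgraph $H$ of a perfect graph has an independent set meeting every maximum clique of $H$. This is trivial, because $H$ is perfect and any colour class of an $\omega(H)$-colouring meets every maximum clique. As a result, your last step proves the wrong thing. Such an independent set of $H$ is a clique of $H^c$, and deleting these one at a time partitions $V(H)$ into $\alpha(H^c)=\omega(H)$ independent sets of $H$. That gives $\chi(H)=\omega(H)$, which you already had, not $\chi(H^c)=\omega(H^c)$. Your double count also does not close in the swapped form. There the weights come from the cliques $K_S$, and $\omega(G')=\max_K\sum_S|K\cap K_S|$ has no useful bound, because two cliques can share many vertices.

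What you need is the dual statement: every induced subgraph $H$ of a perfect graph contains a \emph{clique} meeting every \emph{maximum independent set} of $H$. Deleting that clique lowers $\alpha(H)$ by one. Induction over induced subgraphs then partitions $V(H)$ into $\alpha(H)$ cliques of $H$, which is exactly $\chi(H^c)=\omega(H^c)$. The replication argument belongs here:

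1. Suppose the lemma fails. List the cliques $K_1,\dots,K_k$ of $H$ and pick maximum independent sets $S_i$ with $S_i\cap K_i=\emptyset$.
2. Blow each vertex $x$ up into a clique of $h(x)=|\{i: x\in S_i\}|$ copies, deleting $x$ if $h(x)=0$. Call the result $G'$; it is perfect by your replication lemma.
3. $G'$ has $\sum_i|S_i|=k\,\alpha(H)$ vertices.
4. $\alpha(G')\le\alpha(H)$, since the copies of each vertex are pairwise adjacent.
5. $\omega(G')\le\max_i\sum_j|K_i\cap S_j|\le k-1$, since a clique meets an independent set at most once and $K_i\cap S_i=\emptyset$.
6. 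Hence $\chi(G')\ge|V(G')|/\alpha(G')\ge k>\omega(G')$, contradicting the perfectness of $G'$.
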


\begin{theorem}[Substitution theorem \cite{Lovasz1972}]\label{thm:substitution}
Let $G$ be a perfect graph, and let $G'$ be obtained from $G$ by a \emph{substitution} (or \emph{blow-up}): replace each vertex $v\in V(G)$ by a perfect graph $H_v$, joining every vertex of $H_v$ to every vertex of $H_u$ whenever $u\sim v$ in $G$, and to no vertex of $H_u$ otherwise. Then $G'$ is perfect.
\end{theorem}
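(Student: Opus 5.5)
The plan is to reduce the theorem to substituting a perfect graph for a single vertex, and to handle that case through the \emph{replication lemma}. Substituting for every vertex of $G$ at once is the same as performing one-vertex substitutions in succession. Each intermediate graph is perfect by the one-vertex case, so an induction on the number of substituted vertices finishes the argument. From now on, fix $v\in V(G)$ and a perfect graph $H$. Let $G'$ be obtained by replacing $v$ with $H$, and put $k=\omega(H)$.

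First, I would observe that every induced subgraph of $G'$ is again of the same shape. It is either an induced subgraph of $G-v$, or it arises from an induced subgraph $G_0\ni v$ of $G$ by substituting a nonempty induced subgraph $H_0$ of $H$ for $v$. Both $G_0$ and $H_0$ are perfect. It therefore suffices to prove $\chi(G')\le\omega(G')$ for every such $G'$.

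The key auxiliary step is the replication lemma: if $G$ is perfect and $G^+$ is obtained by adding a vertex $x'$ adjacent to $x$ and to exactly the neighbours of $x$, then $G^+$ is perfect. Induced subgraphs of $G^+$ are either induced subgraphs of $G$ or replications in induced subgraphs of $G$. So it is enough to show $\chi(G^+)\le\omega(G^+)$, which I would do in two cases.
\begin{itemize}
\item If $x$ lies in some maximum clique, then $\omega(G^+)=\omega(G)+1$. We colour $G$ with $\omega(G)$ colours and give $x'$ a new colour.
\item Otherwise $\omega(G^+)=\omega(G)=:\omega$. Take an $\omega$-colouring of $G$ and let $A$ be the colour class containing $x$. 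Every maximum clique of $G$ meets $A$, and it does so in a vertex other than $x$, since $x$ lies in no maximum clique. Hence $\omega\bigl(G-(A\setminus\{x\})\bigr)=\omega-1$. By perfectness of $G$, this graph can be coloured with $\omega-1$ colours. The set $(A\setminus\{x\})\cup\{x'\}$ is independent, because $x'$ has the same neighbours as $x$ outside $\{x\}$, and $x$ has none in $A$. It receives the last colour.
\end{itemize}
Iterating the lemma $k-1$ times shows that $G_k$, the graph obtained from $G$ by replacing $v$ with a clique $K_k$, is perfect.

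Finally, $\omega(G_k)=\omega(G')$. A maximum clique of $G'$ is a clique $K$ of $G$ with $v$ replaced by a maximum clique of $H$ when $v\in K$, and $G_k$ has exactly the same clique sizes. Colour $G_k$ with $\omega(G')$ colours. The $k$ copies of $v$ receive $k$ distinct colours $c_1,\dots,c_k$, none of which is used on $N_G(v)$. Now colour $H$ properly with $k$ colours, which is possible because $H$ is perfect, and assign $c_i$ to its $i$-th colour class. The result is a proper colouring of $G'$ with $\omega(G')$ colours.

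I expect the main obstacle to be the second case of the replication lemma: checking that removing $A\setminus\{x\}$ really lowers the clique number. This needs every maximum clique to meet $A$ away from $x$. The rest is routine bookkeeping.
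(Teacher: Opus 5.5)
Your proof is correct and complete. The reduction to one-vertex substitution, the two-case proof of the replication lemma (every maximum clique meets every colour class of an $\omega$-colouring), and the recolouring of the $\omega(H)$ copies of $v$ by an $\omega(H)$-colouring of $H$ all check out; the paper gives no proof here and just cites Lov\'asz (1972), and your argument is the standard replication-lemma proof of that cited result.
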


Finally, a family $\mathcal F$ of subsets of a finite set is \emph{intersecting} if $A\cap B\ne\emptyset$ for all $A,B\in\mathcal F$, a classical notion from extremal set theory \cite{FranklTokushige2016}. The substitution/blow-up viewpoint of Theorem~\ref{thm:substitution} is exactly the lens through which Proposition~\ref{rem:blowup} realizes $\DR$ as a graph built from the quotient structure $Q_t$.

\subsection{Ring-theoretic preliminaries}

\begin{proposition}[Structure of finite reduced rings]\label{prop:structure}
Every finite reduced commutative ring $R$ is isomorphic to a finite product of finite fields, $R\cong \mathbb F_{q_1}\times\cdots\times \mathbb F_{q_t}$, uniquely up to reordering of the factors.
\end{proposition}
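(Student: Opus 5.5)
The plan is to reduce to two standard facts: a finite reduced ring is a finite product of local rings with nilpotent maximal ideals, and a finite field (more generally, a finite domain) is a field. Uniqueness will then follow from the uniqueness of the primitive idempotents.

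\textbf{Existence.} Since $R$ is finite, it is Artinian. An Artinian ring has finitely many maximal ideals $\mathfrak m_1,\dots,\mathfrak m_t$. Every prime ideal is maximal, because $R/\mathfrak p$ is a finite domain and hence a field. Thus the nilradical equals the Jacobson radical $\mathfrak m_1\cap\cdots\cap\mathfrak m_t$. Reducedness says this intersection is $0$. The maximal ideals are pairwise comaximal, so the Chinese Remainder Theorem gives
\[
R\cong R/(\mathfrak m_1\cap\cdots\cap\mathfrak m_t)\cong R/\mathfrak m_1\times\cdots\times R/\mathfrak m_t .
\]
Each $R/\mathfrak m_i$ is a finite field, hence $\mathbb F_{q_i}$ with $q_i=|R/\mathfrak m_i|$. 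No deep input is needed beyond ``finite integral domains are fields'' (multiplication by a nonzero element is an injective self-map of a finite set) and finiteness of the set of maximal ideals. The latter is immediate here since $R$ is finite.

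\textbf{Uniqueness.} Suppose $R\cong\mathbb F_{q_1}\times\cdots\times\mathbb F_{q_t}\cong\mathbb F_{q'_1}\times\cdots\times\mathbb F_{q'_s}$. The maximal ideals of a product of fields $\prod_i \mathbb F_{q_i}$ are exactly the kernels of the coordinate projections, so there are exactly $t$ of them. Their residue fields are the factors $\mathbb F_{q_i}$. A ring isomorphism carries maximal ideals bijectively to maximal ideals and preserves residue fields. Hence $s=t$, and the multisets $\{q_i\}$ and $\{q'_j\}$ agree. Since a finite field is determined up to isomorphism by its order, the factors agree up to reordering.

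The only step requiring care is identifying the maximal ideals of a finite product of fields. Any ideal of $\prod K_i$ is a product $\prod I_i$ with each $I_i\in\{0,K_i\}$, as one sees using the idempotents $e_i$. The maximal ones are therefore those with exactly one zero coordinate. This is routine, so I expect no real obstacle. The result is classical, being the Artin--Wedderburn theorem in the commutative case, and could equally be cited.
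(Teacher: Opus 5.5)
Your proof is correct and complete. The paper gives no proof of this proposition --- it is recalled as a classical structure fact --- so your argument supplies exactly the standard justification the paper takes for granted: every prime of a finite ring is maximal, so the nilradical equals the Jacobson radical and vanishes, and the Chinese Remainder Theorem gives existence; uniqueness follows by matching maximal ideals and their residue fields under an isomorphism.
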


For $R = \mathbb F_{q_1}\times\cdots\times\mathbb F_{q_t}$ and $x=(x_1,\dots,x_t)\in R$, write $\supp(x)=\{i\in[t] : x_i\ne 0\}$, $[t]=\{1,\dots,t\}$; then $x\in \Zstar$ iff $\emptyset\ne\supp(x)\subsetneq[t]$. For nonempty proper $A\subsetneq[t]$, let $X_A = \{x\in\Zstar : \supp(x) = A\}$, so $|X_A| = \prod_{i\in A}(q_i-1)$; the sets $\{X_A\}$ partition $\Zstar$.

\begin{lemma}\label{lem:ann}
For $x\in X_A$, $\Ann(x)\setminus\{0\}$ consists exactly of the nonzero elements of $R$ supported within $A^c := [t]\setminus A$.
\end{lemma}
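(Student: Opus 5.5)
We prove the lemma by a direct coordinatewise computation, using only that $R=\mathbb F_{q_1}\times\cdots\times\mathbb F_{q_t}$ has componentwise multiplication and that each $\mathbb F_{q_i}$ is a field. Fix $x\in X_A$, so $x_i\ne0$ exactly for $i\in A$. For any $r=(r_1,\dots,r_t)\in R$ we have $rx=(r_1x_1,\dots,r_tx_t)$, so $rx=0$ holds if and only if $r_ix_i=0$ for every $i\in[t]$.

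We split the condition $r_ix_i=0$ by whether $i$ lies in $A$.
\begin{itemize}
\item For $i\in A$, the element $x_i$ is a nonzero element of the field $\mathbb F_{q_i}$, hence invertible. So $r_ix_i=0$ forces $r_i=0$.
\item For $i\in A^c$, we have $x_i=0$, so $r_ix_i=0$ holds for every choice of $r_i$.
\end{itemize}
Therefore $r\in\Ann(x)$ if and only if $r_i=0$ for all $i\in A$, that is, if and only if $\supp(r)\subseteq A^c$. Removing $0$ from both sides gives the claim: $\Ann(x)\setminus\{0\}$ is exactly the set of nonzero elements of $R$ supported within $A^c$.

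There is no genuine obstacle here. The only point needing care is the invertibility of $x_i$ for $i\in A$, which is precisely where the reduced hypothesis, via Proposition~\ref{prop:structure}, is used. As a check against degenerate cases: since $x\in\Zstar$, the set $A$ is a nonempty proper subset of $[t]$, so $A^c\ne\emptyset$. Hence the annihilator described is nonzero, and it has exactly $\prod_{i\in A^c}q_i-1$ nonzero elements.
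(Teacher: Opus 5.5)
Your proof is correct and follows the paper's argument essentially line for line: componentwise multiplication, then for $i\in A$ the nonzero $x_i$ is invertible in $\mathbb F_{q_i}$ and forces $r_i=0$, while for $i\notin A$ there is no constraint on $r_i$. Your closing count of $\prod_{i\in A^c}q_i-1$ nonzero annihilators is a correct small addition that the paper does not state.
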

\begin{proof}
For any $r\in R$, $rx=0$ iff $r_ix_i=0$ for every $i\in[t]$. For $i\in A$, $x_i\ne0$ and $\mathbb F_{q_i}$ is a field, so $r_ix_i=0$ forces $r_i=0$; for $i\notin A$, $x_i=0$, so $r_ix_i=0$ holds regardless of $r_i$. Hence $rx=0$ if and only if $r_i=0$ for every $i\in A$, i.e.\ if and only if $r$ is supported within $A^c$. Restricting to nonzero $r$ gives the claim.
\end{proof}
\section{The Support Decomposition Theorem}\label{sec:support}

We now give an exact adjacency criterion for $\DR$, extending Lemma~\ref{lem:ann} from the single-vertex annihilator description to a full description of edges between support classes.

\begin{theorem}\label{thm:A}
Let $x\in X_A$, $y\in X_B$ be distinct vertices of $\Zstar$. Then
\begin{align}
x\sim y \text{ in } \GR &\iff A\cap B=\emptyset, \label{eq:gamma}\\
x\sim y \text{ in } \WGR &\iff |A^c\cup B^c|\ge 2, \label{eq:wgamma}\\
x\sim y \text{ in } \DR &\iff A\cap B\ne\emptyset \ \text{ and } \ |A^c\cup B^c|\ge 2. \label{eq:diff}
\end{align}
\end{theorem}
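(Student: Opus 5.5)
The plan is to prove the three equivalences in order. Each one reduces to a statement about supports via Lemma~\ref{lem:ann} together with the coordinatewise multiplication in $R\cong\mathbb F_{q_1}\times\cdots\times\mathbb F_{q_t}$.

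For \eqref{eq:gamma}, I note that $(xy)_i=x_iy_i$. In a field this is nonzero exactly when $i\in A\cap B$. So $xy=0$ if and only if $A\cap B=\emptyset$.

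For \eqref{eq:wgamma}, Lemma~\ref{lem:ann} gives that the nonzero $r\in\Ann(x)$ are exactly the nonzero elements with $\supp(r)\subseteq A^c$. Likewise the nonzero $s\in\Ann(y)$ are those with $\supp(s)\subseteq B^c$. By the computation just made, $rs=0$ if and only if $\supp(r)\cap\supp(s)=\emptyset$. So $x\sim y$ in $\WGR$ exactly when there are nonempty sets $C\subseteq A^c$ and $D\subseteq B^c$ with $C\cap D=\emptyset$; any nonempty support is realised by some element, e.g.\ an indicator vector.

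\emph{Sufficiency.} Suppose $|A^c\cup B^c|\ge2$. Both $A^c$ and $B^c$ are nonempty because $A,B\ne[t]$.
\begin{itemize}
\item If $A^c\cup B^c$ contains elements $i\in A^c$ and $j\in B^c$ with $i\ne j$, I take $C=\{i\}$ and $D=\{j\}$.
\item Otherwise every element of $A^c$ equals every element of $B^c$. This forces $A^c=B^c$ to be a single point, which contradicts $|A^c\cup B^c|\ge2$.
\end{itemize}

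\emph{Necessity.} Suppose $|A^c\cup B^c|=1$. Since both sets are nonempty, $A^c=B^c=\{i\}$. Then every admissible $C$ and $D$ equals $\{i\}$, so they always meet and no edge exists.

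This necessity direction is the one delicate point: it relies on properness of $A$ and $B$ so that $A^c$ and $B^c$ are nonempty. The definition of $\WGR$ requires distinct vertices, and the case $A=B$ with $|A^c|=1$ is allowed; there, for instance, two distinct $x,y\in X_A$ are not adjacent. The case analysis is a little fiddly but elementary.

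Finally, \eqref{eq:diff} follows immediately from the definition $E(\DR)=E(\WGR)\setminus E(\GR)$, since an edge of $\DR$ is a pair satisfying \eqref{eq:wgamma} but not \eqref{eq:gamma}. Any vertex left with no incident edge is deleted, but this does not affect adjacency among the remaining vertices.
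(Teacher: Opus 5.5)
Your proof is correct and is essentially the paper's argument: coordinatewise multiplication for \eqref{eq:gamma}, Lemma~\ref{lem:ann} plus indicator vectors at distinct indices $i\in A^c$, $j\in B^c$ for sufficiency in \eqref{eq:wgamma}, and the forced case $A^c=B^c=\{k\}$ (using properness of $A,B$) for necessity. You then obtain \eqref{eq:diff} directly from $E(\DR)=E(\WGR)\setminus E(\GR)$, and your remark that deleting isolated vertices does not change adjacency among the remaining vertices makes explicit a small point the paper leaves implicit.
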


\begin{proof}
\eqref{eq:gamma}: $(xy)_i=x_iy_i$ for each $i$, and since $\mathbb F_{q_i}$ is a field, $x_iy_i=0$ iff $x_i=0$ or $y_i=0$; so $xy=0$ iff no $i$ has both $x_i\ne0$ and $y_i\ne0$, i.e.\ $A\cap B=\emptyset$.

\eqref{eq:wgamma}: By Lemma~\ref{lem:ann} we must determine when a nonzero $r$ supported in $A^c$ and a nonzero $s$ supported in $B^c$ exist with $r_is_i=0$ for every $i$.

($\Leftarrow$) Suppose $|A^c\cup B^c|\ge2$. Then $A^c$ and $B^c$ cannot be equal singletons, so we can choose $i_0\in A^c$ and $j_0\in B^c$ with $i_0\ne j_0$. Setting $r_{i_0}=1$ and all other coordinates of $r$ to $0$, and similarly $s_{j_0}=1$ with all other coordinates $0$, we get $r_is_i=0$ for every $i$ since $i_0\ne j_0$, so $rs=0$ with $r,s\ne0$.

($\Rightarrow$) If $|A^c\cup B^c|\le1$, both nonempty forces $A^c=B^c=\{k\}$. Then every nonzero $r$ supported in $\{k\}$ has $r_k\ne0$, likewise $s_k\ne0$, so $(rs)_k=r_ks_k\ne0$ in the field $\mathbb F_{q_k}$: no witnessing pair exists.

\eqref{eq:diff}: Since $\DR$ has edge set $E(\WGR)\setminus E(\GR)$, $x\sim y$ in $\DR$ iff $x\sim y$ in $\WGR$ and $x\not\sim y$ in $\GR$; substituting \eqref{eq:gamma} and \eqref{eq:wgamma} gives $|A^c\cup B^c|\ge2$ and $A\cap B\ne\emptyset$.
\end{proof}

\begin{corollary}\label{cor:simplified}
For \emph{distinct} classes $A\ne B$, the condition $|A^c\cup B^c|\ge2$ in \eqref{eq:diff} holds automatically. Hence $X_A$ and $X_B$ are completely joined in $\DR$ iff simply $A\cap B\ne\emptyset$.
\end{corollary}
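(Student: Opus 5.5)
The plan is to verify the condition $|A^c\cup B^c|\ge 2$ directly from the hypothesis $A\ne B$, and then read off complete joining from \eqref{eq:diff} in Theorem~\ref{thm:A}. Throughout, $A$ and $B$ are nonempty proper subsets of $[t]$, since they index support classes of $\Zstar$. In particular $A^c$ and $B^c$ are both nonempty.

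First I will argue by contraposition. Suppose $|A^c\cup B^c|\le 1$. Since $A^c$ and $B^c$ are nonempty, their union has size exactly $1$, say $A^c\cup B^c=\{k\}$. Then each of $A^c$ and $B^c$ is a nonempty subset of $\{k\}$, so $A^c=B^c=\{k\}$. Taking complements in $[t]$ gives $A=B=[t]\setminus\{k\}$, which contradicts $A\ne B$. Hence $A\ne B$ forces $|A^c\cup B^c|\ge2$. This is essentially the same observation already used in the ($\Rightarrow$) direction of \eqref{eq:wgamma}.

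Second, I will take arbitrary $x\in X_A$ and $y\in X_B$. Because the classes $X_A$ and $X_B$ are disjoint when $A\ne B$, these vertices are automatically distinct, so Theorem~\ref{thm:A} applies to them. By \eqref{eq:diff} and the first step, $x\sim y$ in $\DR$ if and only if $A\cap B\ne\emptyset$. This condition depends only on $A$ and $B$, not on the particular $x$ and $y$. So if $A\cap B\ne\emptyset$, every pair is adjacent and the classes are completely joined. If $A\cap B=\emptyset$, no pair is adjacent, so the classes are in particular not completely joined; they are not even partially joined. The classes are nonempty, since $|X_A|=\prod_{i\in A}(q_i-1)\ge1$, so "completely joined" is not vacuous and the equivalence is genuine.

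One point needs care. A vertex of $X_A$ might have been deleted from $\DR$ as isolated, so completeness should be read on the vertex set $\Zstar$ before deletion. This causes no difficulty: if $A\cap B\ne\emptyset$, every vertex of $X_A$ has a neighbour in $X_B$, so it is not isolated and is not deleted. There is no real obstacle here; the only thing to watch is the nonemptiness of $A^c$ and $B^c$, which comes from $A$ and $B$ being proper subsets of $[t]$.
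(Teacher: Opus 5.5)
Your proposal is correct and follows the paper's own proof: the contrapositive showing that $|A^c\cup B^c|\le 1$ with both complements nonempty forces $A^c=B^c=\{k\}$, hence $A=B$, is exactly the argument given. Your second step, which reads off complete joining from \eqref{eq:diff}, simply spells out the ``Hence'' that the paper leaves implicit. Its side remarks on distinctness of $x,y$, nonemptiness of the classes, and isolated-vertex deletion are harmless additions.
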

\begin{proof}
Suppose $|A^c\cup B^c|\le1$. As both are nonempty (as $A,B$ are proper), this forces $A^c\cup B^c=\{p\}$ for a single $p$, hence $A^c\subseteq\{p\}$ and $B^c\subseteq\{p\}$; since neither can be empty ($A,B\ne[t]$), $A^c=B^c=\{p\}$, giving $A=B=[t]\setminus\{p\}$, contradicting $A\ne B$.
\end{proof}

\begin{proposition}\label{rem:blowup}
Let $Q_t$ be the \emph{intersection quotient graph} whose vertices are the nonempty proper subsets of $[t]$, with $A\sim B$ iff $A\cap B\ne\emptyset$. Call a subset $A\subsetneq[t]$ a \emph{co-atom} if $|A|=t-1$. Then $\DR$ is exactly the graph obtained from $Q_t$ by substituting a complete graph $K_{w(A)}$ for each vertex $A$ with $|A|\le t-2$, and an empty graph on $w(A)$ vertices for each co-atom, where $w(A):=\prod_{i\in A}(q_i-1)$.
\end{proposition}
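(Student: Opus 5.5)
The plan is to compare the two graphs edge by edge, using the partition $\Zstar=\bigsqcup_A X_A$ of Section~\ref{sec:prelim}. Both $\DR$ (before isolated vertices are deleted) and the substituted graph have vertex set $\Zstar$, if we identify the blown-up copy of $A$ with $X_A$; note $|X_A|=w(A)$. So it suffices to show that a pair of distinct vertices $x\in X_A$, $y\in X_B$ is an edge of $\DR$ exactly when it is an edge of the blow-up. I will split this into the cases $A\ne B$ and $A=B$.

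\emph{Case $A\ne B$.} In the blow-up, $x\sim y$ iff $A\sim B$ in $Q_t$, that is, iff $A\cap B\ne\emptyset$. By Corollary~\ref{cor:simplified}, this is also exactly the adjacency condition in $\DR$. So all edges between distinct classes agree.

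\emph{Case $A=B$.} Now $x\ne y$ lie in the same class. Apply \eqref{eq:diff} of Theorem~\ref{thm:A} with $B=A$. The condition $A\cap A=A\ne\emptyset$ holds automatically. The condition $|A^c\cup A^c|=|A^c|\ge 2$ is equivalent to $|A|\le t-2$. Hence $X_A$ is a clique in $\DR$ when $|A|\le t-2$, and an independent set when $A$ is a co-atom. This matches substituting $K_{w(A)}$ or the empty graph on $w(A)$ vertices, respectively. Together with the first case, the two graphs coincide on $\Zstar$.

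The only delicate step is the clause in the definition of $\DR$ that deletes isolated vertices. The identification must be read with the same convention, deleting isolated vertices of the blow-up as well. I will check that for $t\ge3$ nothing is actually deleted, so the identification is literal. Given a nonempty proper $A$, pick $i\in A$. If $|A|\ge2$, then $B=\{i\}\ne A$ meets $A$. If $A=\{i\}$, then $B=\{i,j\}$ with $j\ne i$ is proper because $t\ge3$, and it meets $A$. By the first case, every vertex of $X_A$ is then joined to all of $X_B$.

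For $t\le2$ there is nothing of substance to check. When $t=1$ the set $\Zstar$ is empty. When $t=2$ the only classes are $\{1\}$ and $\{2\}$; they are disjoint co-atoms, so both graphs are edgeless before deletion, and both become the empty graph after deletion. I will state this remark on the deletion convention explicitly; it is the only potential obstacle, and the verification above disposes of it.
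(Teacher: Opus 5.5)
Your proof is correct and follows the paper's argument. Distinct classes are handled by Corollary~\ref{cor:simplified}, and the internal structure of each $X_A$ by the $B=A$ case of \eqref{eq:diff}, which makes $X_A$ a clique exactly when $|A|\le t-2$. Your extra check on the isolated-vertex deletion in the definition of $\DR$ (nothing is deleted for $t\ge3$, and both graphs become empty for $t\le2$) is correct, and it covers a point the paper's proof passes over silently.
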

\begin{proof}
By Corollary~\ref{cor:simplified}, distinct classes $X_A,X_B$ are completely joined in $\DR$ iff $A\sim B$ in $Q_t$, which is exactly the adjacency rule the substitution construction imposes between the blow-up parts indexed by $A$ and $B$. It remains to check the internal structure of each part $X_A$, i.e.\ the $A=B$ case of \eqref{eq:diff}: here $A^c=B^c$, so $|A^c\cup B^c|=|A^c|=t-|A|$, giving $x\sim y$ in $\DR$ (for distinct $x,y\in X_A$) iff $t-|A|\ge2$, i.e.\ $|A|\le t-2$. So $X_A$ is a clique when $|A|\le t-2$ and an independent set when $|A|=t-1$, matching the stated substitution exactly.
\end{proof}
As an immediate application of Theorem~\ref{thm:A} and Corollary~\ref{cor:simplified}, we can now make precise the relationship between $\DR$ and Badawi's annihilator graph $AG(R)$, where $x\sim y$ in $AG(R)$ iff $\mathrm{Ann}(xy)\ne\mathrm{Ann}(x)\cup\mathrm{Ann}(y)$.

\begin{proposition}\label{prop:hierarchy}
For every finite reduced ring $R\cong\mathbb F_{q_1}\times\cdots\times\mathbb F_{q_t}$, $\GR\subseteq AG(R)\subseteq\WGR$.
\end{proposition}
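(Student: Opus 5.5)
The plan is to translate Badawi's adjacency condition into the support language of Section~\ref{sec:support}, exactly as Theorem~\ref{thm:A} does for $\GR$ and $\WGR$. Once the condition is in that form, both containments reduce to elementary set comparisons. All three graphs share the vertex set $\Zstar$, so it suffices to compare edge sets. Throughout, let $x\in X_A$ and $y\in X_B$ be distinct, with $A,B$ nonempty proper subsets of $[t]$.

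\textbf{Step 1: describe $AG(R)$ by supports.} By Lemma~\ref{lem:ann} (including $0$), $\Ann(x)$ is the set of all $r\in R$ with $\supp(r)\subseteq A^c$, and likewise $\Ann(y)$ corresponds to $B^c$. The product $xy$ has support $A\cap B$. The argument of Lemma~\ref{lem:ann} applies verbatim to any element, including $0$, so $\Ann(xy)$ is the set of all $r$ with $\supp(r)\subseteq (A\cap B)^c=A^c\cup B^c$.

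Now $\Ann(x)\cup\Ann(y)\subseteq\Ann(xy)$ always holds. So $\Ann(xy)\ne\Ann(x)\cup\Ann(y)$ exactly when some $r$ is supported in $A^c\cup B^c$ but in neither $A^c$ nor $B^c$ alone. Taking $r$ to be a $0/1$ vector, such an $r$ exists iff there are indices $i\in A^c\setminus B^c$ and $j\in B^c\setminus A^c$. This happens iff $A^c\not\subseteq B^c$ and $B^c\not\subseteq A^c$, that is, iff $A$ and $B$ are incomparable under inclusion.

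This case analysis is the main point needing care. I would check the degenerate case $xy=0$ separately: there $\Ann(xy)=R$, while $1\notin\Ann(x)\cup\Ann(y)$ because $A,B\ne\emptyset$, so the pair is adjacent in $AG(R)$. This agrees with the criterion, since disjoint nonempty sets are incomparable. The case $A=B$ gives no edge, also consistent with the criterion. The conclusion is
\[
x\sim y \text{ in } AG(R)\iff A\not\subseteq B\ \text{and}\ B\not\subseteq A.
\]

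\textbf{Step 2: $\GR\subseteq AG(R)$.} By \eqref{eq:gamma}, an edge of $\GR$ has $A\cap B=\emptyset$. Since $A$ and $B$ are nonempty, neither can contain the other, so by Step 1 the pair is an edge of $AG(R)$.

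\textbf{Step 3: $AG(R)\subseteq\WGR$.} An edge of $AG(R)$ has $A,B$ incomparable, and in particular $A\ne B$. Corollary~\ref{cor:simplified} then gives $|A^c\cup B^c|\ge2$, so by \eqref{eq:wgamma} the pair is an edge of $\WGR$. This completes the chain $\GR\subseteq AG(R)\subseteq\WGR$.

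As a by-product, the criterion of Step 1 also shows where strictness comes from. Comparable distinct classes, such as $A\subsetneq B$, are joined in $\WGR$ (and indeed in $\DR$, since $A\cap B=A\ne\emptyset$) but not in $AG(R)$. Intersecting incomparable classes are joined in $AG(R)$ but not in $\GR$.
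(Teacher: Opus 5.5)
Your proof is correct and follows the paper's route: both derive that $x\sim y$ in $AG(R)$ iff $A$ and $B$ are incomparable, then use Corollary~\ref{cor:simplified} and \eqref{eq:wgamma} to get $AG(R)\subseteq\WGR$. The one difference is that you also derive $\GR\subseteq AG(R)$ from that criterion (nonempty disjoint sets are incomparable) and treat the case $xy=0$ explicitly. The paper instead cites Badawi for the first containment, and applies Lemma~\ref{lem:ann} to $xy$ without noting that $xy$ may be $0$.
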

\begin{proof}
Same class pairs $x,y\in X_A$ are never adjacent in $\GR$ or $AG(R)$, as $A\cap A=A\ne\emptyset$ and $A\subseteq A$ trivially, so it suffices to consider $x\in X_A$, $y\in X_B$ with $A\ne B$. We first show $\mathrm{Ann}(xy)=\mathrm{Ann}(x)\cup\mathrm{Ann}(y)$ iff $A\subseteq B$ or $B\subseteq A$. Since $\supp(xy)=A\cap B$, Lemma~\ref{lem:ann} gives $\mathrm{Ann}(xy)\setminus\{0\}=\{r : r \text{ supported in } (A\cap B)^c\}\supseteq \mathrm{Ann}(x)\cup\mathrm{Ann}(y)$ always. Equality fails exactly when some nonzero $r$ is supported in $(A\cap B)^c$ but not in $A^c$ or $B^c$, i.e.\ $r$ has a nonzero coordinate in $A\setminus B$ \emph{and} a nonzero coordinate in $B\setminus A$; such $r$ exists iff both $A\setminus B\ne\emptyset$ and $B\setminus A\ne\emptyset$, i.e.\ iff $A,B$ are incomparable. So $x\sim y$ in $AG(R)$ iff $A,B$ are incomparable, which in particular forces $A\ne B$. By Corollary~\ref{cor:simplified}, $|A^c\cup B^c|\ge2$ automatically for distinct $A\ne B$, so by Theorem~\ref{thm:A}\eqref{eq:wgamma}, incomparability of $A,B$ implies $x\sim y$ in $\WGR$: this gives $AG(R)\subseteq\WGR$, proved here in full. The remaining containment $\GR\subseteq AG(R)$ is not reproved here; it is \cite[Prop.~2.2]{Badawi2014}.
\end{proof}

\begin{example}\label{ex:hierarchy}
Both containments in Proposition~\ref{prop:hierarchy} are generally strict, and $\DR\ne AG(R)\setminus\GR$: for $t=3$, $A=\{1\}\subseteq B=\{1,2\}$ gives $A\cap B\ne\emptyset$, so $x\sim y$ in $\DR$ for $x\in X_A,y\in X_B$ (Corollary~\ref{cor:simplified}), while $A\subseteq B$ means $x\not\sim y$ in $AG(R)$. Direct computation of $\GR$, $AG(R)$, $\WGR$ from the ring definitions for $R=\mathbb F_2\times\mathbb F_3\times\mathbb F_5$ confirms $|E(\GR)|=38$, $|E(AG(R))|=94$, $|E(\WGR)|=175$, and $|E(AG(R))\setminus E(\GR)|=56 \ne 137=|E(\DR)|$.
\end{example}

\section{Basic Structural Properties of $\DR$}\label{sec:basic}

Having established the adjacency criterion of Theorem~\ref{thm:A} and Corollary~\ref{cor:simplified}, we now derive several elementary structural invariants of $\DR$: the threshold $t$ at which $\DR$ becomes nonempty, its connectivity, diameter, and girth once past that threshold, and its exact clique number at $t=3$.
\subsection{Emptiness for $t\le 2$}

\begin{theorem}\label{thm:B}
If $t\le 2$, then $\DR$ has no edges.
\end{theorem}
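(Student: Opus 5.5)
The plan is to apply criterion \eqref{eq:diff} of Theorem~\ref{thm:A} directly and show that it can never be satisfied when $t\le2$. An edge $x\sim y$ of $\DR$ with $x\in X_A$, $y\in X_B$ requires both $A\cap B\ne\emptyset$ and $|A^c\cup B^c|\ge2$. When $t\le 2$ we will show these two conditions are incompatible, by splitting into the cases $t=1$ and $t=2$.

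For $t=1$, $R$ is a field, so $\Zstar=\emptyset$ and there is nothing to prove. For $t=2$, the only nonempty proper subsets of $[2]$ are $\{1\}$ and $\{2\}$, so each of $A,B$ is a singleton.

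\begin{itemize}
\item If $A\ne B$, then $A\cap B=\emptyset$, so the first condition fails; indeed $x\sim y$ already in $\GR$.
\item If $A=B=\{i\}$, then $A^c\cup B^c=\{3-i\}$ has size $1<2$, so the second condition fails.
\end{itemize}

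Hence no pair of distinct vertices satisfies \eqref{eq:diff}, so $E(\WGR)\setminus E(\GR)=\emptyset$. Every vertex of $\Zstar$ is then isolated, and after deletion $\DR$ is empty; in particular it has no edges.

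Equivalently, one can read this off Proposition~\ref{rem:blowup}. For $t=2$ every vertex of $Q_2$ is a co-atom, so each class is an independent set, and $Q_2$ itself has no edges since $\{1\}\cap\{2\}=\emptyset$. There is no real obstacle here; the only care needed is the degenerate $t=1$ case and noting that ``no edges'' survives the deletion of isolated vertices.
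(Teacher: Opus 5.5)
Your proof is correct and follows essentially the same route as the paper: you dispose of $t=1$ because $\Zstar=\emptyset$, and for $t=2$ you note that the distinct classes $\{1\},\{2\}$ are disjoint while a single class $\{i\}$ has $|A^c\cup B^c|=1$, so \eqref{eq:diff} never holds. The paper states the same case split via Corollary~\ref{cor:simplified} and the internal independence of co-atom classes, which is exactly your alternative reading through Proposition~\ref{rem:blowup}.
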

\begin{proof}
If $t=1$, $R$ is a field, so $\Zstar=\emptyset$ and the claim is vacuous. If $t=2$, the only classes are $\{1\}$ and $\{2\}$, both co-atoms and hence with no internal edges; since $\{1\}\cap\{2\}=\emptyset$, Corollary~\ref{cor:simplified} shows they are not joined either. So $\DR$ has no edges in either case.
\end{proof}

By Theorem~\ref{thm:B}, $\DR$ is nonempty only once $t\ge3$; we assume $t\ge3$ from here on.

\subsection{Connectivity and Diameter for $t\ge 3$}

\begin{theorem}\label{thm:C}
For $t\ge 3$, $\DR$ has no isolated vertices, is connected, and $\operatorname{diam}(\DR)=2$.
\end{theorem}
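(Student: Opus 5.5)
The plan is to work entirely through the blow-up description of Proposition~\ref{rem:blowup}, together with Corollary~\ref{cor:simplified}. The vertex set of $\DR$ is a priori $\Zstar$ with isolated vertices deleted. So the first step is to show that for $t\ge3$ no vertex of $\Zstar$ is isolated in the difference graph; then the vertex set of $\DR$ is all of $\Zstar$. Take $x\in X_A$ with $\emptyset\ne A\subsetneq[t]$. Pick $i\in A$ and a second index $j\ne i$ so that $B=\{i,j\}$ is a proper subset of $[t]$ different from $A$; this is possible since $t\ge3$. If $A=\{i,j\}$, use $B=\{i\}$ instead. Then $A\cap B\ni i$ and $A\ne B$, so by Corollary~\ref{cor:simplified} $x$ is adjacent to every element of $X_B$, and $X_B\ne\emptyset$ because every $q_k\ge2$.

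For the diameter bound, take distinct vertices $x\in X_A$ and $y\in X_B$.
\begin{itemize}
\item[(i)] If $A\cap B\ne\emptyset$ and $A\ne B$, then $x\sim y$ directly by Corollary~\ref{cor:simplified}.
\item[(ii)] If $A=B$ with $|A|\le t-2$, then $x\sim y$ by the internal clique structure in Proposition~\ref{rem:blowup}.
\item[(iii)] If $A=B$ is a co-atom, choose $i\in A$ and set $C=\{i\}$. Then $C\ne A$ because $|A|=t-1\ge2$. Any $z\in X_C$ is adjacent to both $x$ and $y$, giving distance $2$.
\item[(iv)] If $A\cap B=\emptyset$, pick $i\in A$ and $j\in B$, and set $C=\{i,j\}$. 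This $C$ is proper since $t\ge3$ and $C\ne A,B$; the latter holds because $j\notin A$ and $i\notin B$. So any $z\in X_C$ is a common neighbour of $x$ and $y$.
\end{itemize}
Hence every pair is at distance at most $2$, and in particular $\DR$ is connected.

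For equality $\operatorname{diam}(\DR)=2$, it suffices to exhibit two nonadjacent vertices. The classes $X_{\{1\}}$ and $X_{\{2\}}$ are disjoint-support classes, hence not joined, and both are nonempty. So $x\in X_{\{1\}}$, $y\in X_{\{2\}}$ are at distance exactly $2$.

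The only mildly delicate step is the co-atom case (iii), where same-class vertices are nonadjacent and one must ensure the auxiliary class $C$ is distinct from $A$. This uses $t\ge3$, and the same hypothesis guarantees $C=\{i,j\}$ is proper in case (iv).
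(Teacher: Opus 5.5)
Your proof is correct, but it finds the common neighbour differently from the paper.

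The paper uses the co-atom classes $X_{[t]\setminus\{j\}}$ as universal hubs. They are pairwise joined, and every class is joined to at least $t-1$ of them. A pigeonhole count $|S_A\cap S_B|\ge 2(t-1)-t=t-2\ge 1$ then gives a common co-atom neighbour for any non-adjacent pair, with no case split on how $A$ and $B$ relate. The same two facts settle the isolated-vertex claim at once, and they are reused later: three co-atoms give the triangle for the girth, and two co-atoms dominate $\DR$.

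You instead split according to whether $A=B$ and whether $A\cap B=\emptyset$, and build an explicit witness in each case:
\begin{itemize}
\item a singleton $\{i\}\subseteq A$ for two vertices of the same co-atom class;
\item a straddling pair $\{i,j\}$ for disjoint supports.
\end{itemize}
This avoids the counting step. It also makes explicit that the witness class is distinct from $A$ and $B$, which is needed to invoke Corollary~\ref{cor:simplified}. The paper's Step~2 handles that point loosely: it says a class of size at least $2$ is joined to ``all $t$'' co-atoms, which fails when the class is itself a co-atom. Nothing breaks there, since Step~3 only uses the bound $t-1$.

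Your reading of ``no isolated vertices'' as the statement that no vertex of $\Zstar$ is deleted matches the paper. So does your use of two singleton classes to show $\operatorname{diam}(\DR)=2$ exactly.
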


\begin{proof}
Write $A_i:=[t]\setminus\{i\}$ for the $t$ co-atoms.

\smallskip\noindent\textbf{Step 1.} For $i\ne j$, $A_i\cap A_j=[t]\setminus\{i,j\}$, which is nonempty since $t\ge3$; so by Corollary~\ref{cor:simplified} the co-atoms are pairwise joined.

\smallskip\noindent\textbf{Step 2.} Let $A$ be any nonempty proper subset other than $A_j$. If $|A|\ge2$, then $A\not\subseteq\{j\}$, so $A\cap A_j=A\setminus\{j\}$ is nonempty for every $j$, and $A$ is joined to all $t$ co-atoms. If instead $A=\{k\}$, then $A\cap A_j$ is nonempty exactly when $j\ne k$, so $A$ is joined to every co-atom except $A_k$, that is, to $t-1$ of the $t$ co-atoms. Either way every class has at least $t-1\ge2$ co-atom neighbors, so no vertex of $\DR$ is isolated.

\smallskip\noindent\textbf{Step 3.} Let $u\in X_A$ and $v\in X_B$ be non-adjacent, and let $S_A,S_B\subseteq[t]$ index the co-atoms joined to $A$ and to $B$ respectively. By Step 2, $|S_A|,|S_B|\ge t-1$, so $|S_A\cap S_B|\ge(t-1)+(t-1)-t=t-2\ge1$. Fixing $j\in S_A\cap S_B$, every vertex of $X_{A_j}$ is joined to both $u$ and $v$, since adjacency in $\DR$ depends only on the classes involved. Hence $\operatorname{dist}(u,v)\le2$, and this bounds the diameter by $2$.

\smallskip\noindent\textbf{Step 4.} Distinct singletons $\{i\},\{j\}$ satisfy $\{i\}\cap\{j\}=\emptyset$ and so are non-adjacent, showing $\DR$ is not complete. Together with Step 3, $\operatorname{diam}(\DR)=2$ exactly.
\end{proof}

\subsection{Girth}

\begin{theorem}\label{thm:D}
For $t\ge3$, $\operatorname{girth}(\DR)=3$.
\end{theorem}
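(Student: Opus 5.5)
Since $\DR$ is a simple graph, every cycle has length at least $3$, so $\operatorname{girth}(\DR)\ge3$. It therefore suffices to exhibit one triangle in $\DR$ for every $t\ge3$. Because every support class $X_A$ is nonempty (as $|X_A|=\prod_{i\in A}(q_i-1)\ge1$), we can choose the triangle at the level of support classes and then pick one vertex from each class.

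The plan is to use three co-atoms. Take the three distinct classes $A_1=[t]\setminus\{1\}$, $A_2=[t]\setminus\{2\}$, $A_3=[t]\setminus\{3\}$, which exist because $t\ge3$. By Step~1 of the proof of Theorem~\ref{thm:C}, any two distinct co-atoms satisfy $A_i\cap A_j=[t]\setminus\{i,j\}\ne\emptyset$, since $t\ge3$. Hence Corollary~\ref{cor:simplified} shows that $X_{A_1},X_{A_2},X_{A_3}$ are pairwise completely joined in $\DR$. Choosing any $x_i\in X_{A_i}$ for $i=1,2,3$ gives three distinct vertices, distinct because they have different supports, that are pairwise adjacent. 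This is a triangle, so $\operatorname{girth}(\DR)\le3$.

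There is no substantial obstacle. The only point needing care is that at $t=3$ the pairwise intersections $[t]\setminus\{i,j\}$ are still nonempty, which holds because they equal the singleton $\{k\}$, where $k$ is the third index. An alternative when $t\ge4$ would be to use a single class $X_A$ with $|A|\le t-2$, which is a clique by Proposition~\ref{rem:blowup}. However, that requires $w(A)\ge3$ and so depends on the field orders. The co-atom argument avoids this dependence and works uniformly for all field orders, which is why I would use it.
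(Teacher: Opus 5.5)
Your proof is correct and is essentially the paper's own argument. Both take one vertex from each of three co-atom classes, which are pairwise joined by Step~1 of Theorem~\ref{thm:C}, and combine the resulting triangle with the trivial bound $\operatorname{girth}\ge 3$ for simple graphs.
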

\begin{proof}
By Step 1 of Theorem~\ref{thm:C}, any three distinct co-atom classes are pairwise joined, so choosing one vertex from each of three co-atoms produces a triangle. As every simple graph has girth at least $3$, this gives $\operatorname{girth}(\DR)=3$.
\end{proof}

\subsection{Clique Number for $t=3$}

At $t=3$ the class structure of Proposition~\ref{rem:blowup} is simple enough to describe completely: there are three singleton classes $\{1\},\{2\},\{3\}$, each a clique since $|A|=1\le t-2$, and three co-atom classes, each internally independent.

\begin{theorem}\label{thm:E}
For $t=3$, $\omega(\DR) = \max(q_1,q_2,q_3)+1$.
\end{theorem}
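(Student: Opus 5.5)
The plan is to reduce the computation of $\omega(\DR)$ to a maximization over intersecting families in the quotient $Q_3$, using Proposition~\ref{rem:blowup}. A clique $K$ in $\DR$ meets some set of classes $\mathcal F=\{A : K\cap X_A\ne\emptyset\}$. By Corollary~\ref{cor:simplified}, two vertices in distinct classes are adjacent iff the classes intersect, so $\mathcal F$ must be an intersecting family of nonempty proper subsets of $[3]$. Within a class, $K$ can contain all of $X_A$ when $A$ is a singleton, since singletons are cliques ($|A|=1\le t-2$). When $A$ is a co-atom, $K$ contains at most one vertex of $X_A$, since co-atom classes are independent. Hence
\[
|K|\le \sum_{A\in\mathcal F,\,|A|=1}(q_A-1)+\#\{A\in\mathcal F: |A|=2\},
\]
where $q_{\{i\}}:=q_i$. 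Conversely, any intersecting $\mathcal F$ is realized with equality. This requires only that every class is nonempty, which holds because $|X_A|=\prod_{i\in A}(q_i-1)\ge1$.

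Next I will enumerate the maximal intersecting families in $Q_3$. Distinct singletons are disjoint, so $\mathcal F$ contains at most one singleton. The three co-atoms pairwise intersect (Step 1 of Theorem~\ref{thm:C}). A singleton $\{i\}$ meets exactly the two co-atoms containing $i$. So the maximal intersecting families are $\{\{1,2\},\{1,3\},\{2,3\}\}$, which gives a clique of size $3$, and $\{\{i\},[3]\setminus\{j\},[3]\setminus\{k\}\}$ with $\{i,j,k\}=[3]$, which gives a clique of size $(q_i-1)+2=q_i+1$. Every intersecting family is contained in one of these, and the bound above is monotone in $\mathcal F$, so
\[
\omega(\DR)=\max\bigl(3,\ q_1+1,\ q_2+1,\ q_3+1\bigr).
\]

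Finally, since every $q_i\ge2$, we have $q_i+1\ge3$, so the all-co-atom family never wins strictly, and $\omega(\DR)=\max(q_1,q_2,q_3)+1$. The explicit witness clique is $X_{\{i\}}$ together with one vertex from each of $X_{[3]\setminus\{j\}}$ and $X_{[3]\setminus\{k\}}$, for $i$ attaining the maximum.

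The only delicate point is the first step: the upper bound has to account for the one-vertex-per-co-atom restriction, which is exactly the $A=B$ case of \eqref{eq:diff}. Without it, a co-atom class of size $(q_j-1)(q_k-1)$ could wrongly appear to contribute more than one vertex. Once that is in place, the enumeration of intersecting families on $[3]$ is routine.
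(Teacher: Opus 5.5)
Your proof is correct and follows essentially the same route as the paper. Both arguments use the $t=3$ adjacency rules from Corollary~\ref{cor:simplified}: at most one singleton class, one vertex per co-atom class, and $\{i\}$ joined only to the two co-atoms containing $i$. These reduce the problem to the two maximal configurations of sizes $3$ and $q_i+1$, and $q_i\ge2$ discards the first; your explicit weight bound over intersecting families just states the upper-bound half a little more formally than the paper's enumeration of clique ``shapes''.
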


\begin{proof}
By Corollary~\ref{cor:simplified}, two distinct singletons $\{i\},\{j\}$ are never joined, since $\{i\}\cap\{j\}=\emptyset$; a singleton $\{i\}$ and a co-atom $A_j$ are joined exactly when $i\ne j$; and by Step~1 of Theorem~\ref{thm:C}, distinct co-atoms are always joined.

Since two distinct singletons are never joined, a clique cannot draw vertices from more than one singleton class. This leaves two possible shapes for a maximal clique. The first uses all three co-atom classes, contributing one vertex from each, for a total of $3$. The second uses one singleton class $\{i\}$, contributing all $q_i-1$ of its vertices since it is internally a clique, together with the two co-atoms joined to it, $A_j$ and $A_k$ for $\{i,j,k\}=\{1,2,3\}$ (the co-atom $A_i$ is excluded, since $\{i\}$ and $A_i$ are not joined), contributing one further vertex each; this gives a total of $(q_i-1)+2=q_i+1$.

No other class combination is possible: using two singletons is ruled out above, and a singleton together with its own opposite co-atom is ruled out by non-adjacency. Since $q_i\ge2$ for every $i$, the second shape with $i=\arg\max q_i$ gives $\max_i q_i+1\ge3$, which is at least as large as the first shape. Hence $\omega(\DR)=\max_i q_i+1$, attained by taking all of $X_{\{i\}}$ for $i=\arg\max q_i$ together with one vertex from each of the two co-atoms joined to it.
\end{proof}

\section{A Perfectness Dichotomy}\label{sec:perfect}

Having determined $\omega(\DR)$ exactly at $t=3$, we now turn to a stronger property: perfection itself. We show $\DR$ is perfect exactly at $t=3$ and $t=4$, and never perfect beyond that, treating each case in turn.

\subsection{The case $t=3$}

\begin{lemma}\label{lem:t3}
$Q_3\cong K_{2,2,2}$, the complete tripartite graph with parts of size $2$. Consequently $Q_3$ is perfect.
\end{lemma}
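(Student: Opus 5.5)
The plan is to exhibit an explicit partition of the six vertices of $Q_3$ into three pairs and to check that this partition realizes the complete tripartite structure $K_{2,2,2}$. The vertices of $Q_3$ are the three singletons $\{1\},\{2\},\{3\}$ and the three co-atoms $A_1=\{2,3\}$, $A_2=\{1,3\}$, $A_3=\{1,2\}$. The natural pairing is
\[
P_i:=\bigl\{\{i\},\,A_i\bigr\},\qquad i=1,2,3,
\]
which pairs each singleton with its complementary co-atom. The first step is to check that each $P_i$ is an independent pair. This holds because $\{i\}\cap A_i=\emptyset$, so $\{i\}\not\sim A_i$ in $Q_3$, exactly as already used in Theorem~\ref{thm:E}.

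The second step is to verify that vertices in different parts are adjacent. There are three kinds of cross pairs, and I would check them in this order.
\begin{itemize}
\item \emph{Co-atom with co-atom.} For $A_i$ and $A_j$ with $i\ne j$, adjacency is Step~1 of Theorem~\ref{thm:C}.
\item \emph{Singleton with foreign co-atom.} For $\{i\}$ and $A_j$ with $j\ne i$, we have $i\in A_j$, so they are adjacent.
\item \emph{Two singletons.} For $\{i\}$ and $\{j\}$ with $i\ne j$, adjacency is still required.
\end{itemize}
Once all three kinds are adjacent, the partition $P_1\sqcup P_2\sqcup P_3$ gives the isomorphism with $K_{2,2,2}$. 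Perfectness then follows quickly: the complement of $K_{2,2,2}$ is $3K_2$, which has no induced odd cycle of length at least $5$. Theorem~\ref{thm:spgt}, or equivalently Theorem~\ref{thm:wpgt} applied to the obviously perfect graph $3K_2$, then shows that $Q_3$ is perfect.

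The main obstacle is the third kind of cross pair, two distinct singletons. Under the intersection rule defining $Q_3$, we have $\{i\}\cap\{j\}=\emptyset$, so these pairs are \emph{not} adjacent. This is the same fact used in Step~4 of Theorem~\ref{thm:C} and in Theorem~\ref{thm:E}. So this is the step where the identification must be reconciled with the definition. A non-edge count shows the gap: $K_{2,2,2}$ has exactly $3$ non-edges, while $Q_3$ as defined has $6$, namely the three pairs $\{i\},A_i$ and the three singleton pairs.

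Faced with this, my first move is to check which reading of $Q_3$ makes the three singleton non-edges disappear, for example whether the intended graph records adjacency only up to the pairing $P_i$. The second move is to ensure that the perfectness conclusion, which is what the paper uses downstream, does not rest on that step. For that I would argue directly. The co-atoms form a triangle, each singleton $\{i\}$ is adjacent exactly to $A_j$ and $A_k$, and the singletons are pairwise non-adjacent. A short case check then shows that neither $Q_3$ nor its complement contains an induced $C_5$. On six vertices, any induced $C_5$ would have to omit exactly one vertex, which makes the check finite. Theorem~\ref{thm:spgt} then yields perfectness of $Q_3$ either way.
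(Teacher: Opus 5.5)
You are right about the obstruction, and you should present it as a disproof rather than look for a reading that rescues the claim. Under the rule $A\sim B\iff A\cap B\ne\emptyset$, the three pairs of distinct singletons are non-edges. So $Q_3$ has $6$ non-edges and $9$ edges, with degree $4$ at each co-atom and degree $2$ at each singleton. By contrast, $K_{2,2,2}$ is $4$-regular with $12$ edges, so the first assertion of the lemma is false.

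No reinterpretation repairs it. The graph that governs $\DR$ must have distinct singletons non-adjacent: elements of $X_{\{i\}}$ and $X_{\{j\}}$ multiply to zero, so they are adjacent in $\GR$ and not in $\DR$. Corollary~\ref{cor:simplified} says the same, and Theorem~\ref{thm:E} relies on exactly this. The paper's own proof commits the error you isolated. It first records that distinct singletons are never adjacent, then asserts that the only non-adjacent pairs are $\{i\}$ and $p_i$. In fact $Q_3$ is the $3$-sun: the triangle $A_1A_2A_3$ with $\{i\}$ attached to the edge $A_jA_k$. Its complement $Q_3^{\,c}$ is the net, a triangle on the singletons with pendant vertex $A_i$ at $\{i\}$, not $3K_2$. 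So the paper's complement argument for perfectness does not apply either.

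The conclusion that $Q_3$ is perfect is nevertheless true, and it is all that Theorem~\ref{thm:F} uses. Your second move is therefore the right repair, but you left the ``short case check'' as a promise; it takes only a few lines.

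\begin{itemize}
\item \emph{Key fact.} On an induced cycle of length at least $4$, every vertex has two cycle-neighbours that are non-adjacent.
\item \emph{No hole in $Q_3$.} Each singleton $\{i\}$ has exactly the neighbours $A_j,A_k$ in $Q_3$, and these are adjacent. So no singleton lies on a hole, and the three co-atoms alone cannot carry one.
\item \emph{No hole in $Q_3^{\,c}$.} Each co-atom $A_i$ has degree $1$ (its only neighbour is $\{i\}$), so it lies on no cycle at all. The three singletons alone cannot carry a hole.
\end{itemize}

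Hence neither $Q_3$ nor $Q_3^{\,c}$ contains an induced $C_5$; indeed both are chordal. Theorem~\ref{thm:spgt} then gives perfectness. The lemma should be restated simply as ``$Q_3$ is perfect,'' with this proof replacing the $K_{2,2,2}$ identification.
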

\begin{proof}
Label the vertices of $Q_3$ by the nonempty proper subsets of $[3]$: singletons $\{1\},\{2\},\{3\}$ and pairs $\{2,3\},\{1,3\},\{1,2\}$. Write $p_i$ for the pair $[3]\setminus\{i\}$, complementary to the singleton $\{i\}$. Two distinct singletons are always disjoint, so never adjacent in $Q_3$. A singleton $\{i\}$ and a pair $p_j$ are disjoint exactly when $j=i$, so they are adjacent exactly when $i\ne j$. Two distinct pairs $p_i,p_j$ satisfy $p_i\cap p_j=[3]\setminus\{i,j\}$, which is nonempty, so they are always adjacent.

The only non-adjacent pairs of vertices in $Q_3$ are therefore $\{i\}$ and $p_i$ for $i=1,2,3$, a perfect matching on the six vertices. So $Q_3$ is $K_6$ with a perfect matching removed, which is exactly $K_{2,2,2}$.

It remains to show $K_{2,2,2}$ is perfect. Its complement $K_{2,2,2}^{\,c}$ is a disjoint union of three edges. Every induced subgraph of a disjoint union of cliques is again a disjoint union of cliques, and for such a graph $\chi=\omega$ always: the chromatic number equals the size of the largest clique, since each clique can be colored with its own distinct set of colors independently of the others. So $K_{2,2,2}^{\,c}$ is perfect, and by the Weak Perfect Graph Theorem (Theorem~\ref{thm:wpgt}), $K_{2,2,2}$ itself is perfect.
\end{proof}

\subsection{The case $t=4$}

For $t=4$, $Q_4$ has $14$ vertices, indexed by the nonempty proper subsets of $[4]$: the four singletons $S_i=\{i\}$, the six pairs $P_{ij}=\{i,j\}$, and the four co-atoms $T_i=[4]\setminus\{i\}$. Adjacency between any two of these classes is determined entirely by set intersection, via Corollary~\ref{cor:simplified}, so $Q_4$ is a completely explicit, finite graph.

\begin{lemma}\label{lem:Q4perfect}
$Q_4$ is perfect.
\end{lemma}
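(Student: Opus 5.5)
We argue via the complement, using the Weak Perfect Graph Theorem (Theorem~\ref{thm:wpgt}): it suffices to show that $Q_4^c$ is perfect. In $Q_4^c$ two classes are adjacent iff they are \emph{disjoint}, which makes the structure sparse and easy to read off.
\begin{itemize}
\item The singletons $S_1,\dots,S_4$ are pairwise disjoint, so they form a $K_4$.
\item $S_i$ is adjacent to $P_{jk}$ exactly when $i\notin\{j,k\}$.
\item $P_{ij}$ is adjacent to $P_{kl}$ only when $\{i,j,k,l\}=[4]$; so the pairs induce a perfect matching on six vertices.
\item Each $P_{ij}$ has exactly three neighbours: $P_{kl}$, $S_k$ and $S_l$.
\item $T_i$ is disjoint from $S_i$ and from nothing else, so $T_i$ is a pendant vertex of $Q_4^c$.
\end{itemize}

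\textbf{Step 1: discard the co-atoms.} We then apply the Strong Perfect Graph Theorem (Theorem~\ref{thm:spgt}) to $Q_4^c$; note that the complement of $Q_4^c$ is $Q_4$ itself, so we must exclude odd holes and odd antiholes of length $\ge5$ in $Q_4^c$. A vertex of degree $1$ lies in no induced cycle. It also lies in no antihole of length $\ell\ge5$, since every vertex there has degree $\ell-3\ge2$ inside it. Hence the $T_i$ can be deleted, and it suffices to treat the induced subgraph $H$ on the $S_i$ and $P_{ij}$.

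\textbf{Step 2: antiholes.} An odd antihole of length $\ell\ge7$ needs internal degree $\ell-3\ge4$. Every $P_{ij}$ has degree only $3$ in $H$, so such an antihole would have to lie inside $\{S_1,\dots,S_4\}$, which has too few vertices. The $5$-antihole is just $C_5$, so it is handled together with the holes in Step~3.

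\textbf{Step 3: holes (the main obstacle).} A hole is triangle-free, so it contains at most two singletons, and if it contains two they are consecutive on it. The pair vertices induce a matching, so every maximal run of consecutive pair vertices on the hole has at most $2$ vertices. Removing at most two singleton vertices from the cycle leaves at most two such runs. Hence every hole in $H$ has length at most $2+2+2=6$, and only length $5$ remains to be excluded. A $5$-hole must then consist of two adjacent singletons $S_a,S_b$ plus runs of sizes $2$ and $1$, or exactly one singleton plus a run of $4$; the latter is impossible.

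So a $5$-hole would have the form $S_a, S_b, P, P', P''$ with $P\sim P'$ a matched pair. Here $P''$ is adjacent to both $S_a$ and $S_b$, forcing $P''=P_{cd}$ with $\{c,d\}=[4]\setminus\{a,b\}$. But then $P''$ is matched with $P_{ab}$, and $P_{ab}$ is adjacent to neither $S_a$ nor $S_b$. Meanwhile the matched pair $\{P,P'\}$ must be $\{P_{ac},P_{bd}\}$ or $\{P_{ad},P_{bc}\}$, with $S_b$ adjacent to one end and $S_a$ to the other, and $P''=P_{cd}$ non-adjacent to both. This short case check is where care is needed. I would verify it by noting that whichever $P\in\{P_{ac},P_{ad}\}$ is adjacent to $S_b$ is also adjacent to $S_{d}$ or $S_c$; then either a chord appears, or the configuration closes into a $4$-cycle rather than a $5$-hole. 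I expect this to rule out $C_5$. If that argument gets tangled, the fallback is a direct finite enumeration of $5$-vertex subsets of the $10$-vertex graph $H$, up to the $S_4$-symmetry permuting $[4]$, which is small.

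With no odd holes or antiholes in $H$, hence none in $Q_4^c$, Theorem~\ref{thm:spgt} shows that $Q_4^c$ is perfect, and Theorem~\ref{thm:wpgt} gives that $Q_4$ is perfect.
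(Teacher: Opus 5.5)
Your Steps~1 and~2 are sound. In $Q_4^{c}$ each co-atom $T_i$ is pendant (its only neighbour is $S_i$), so it lies in no hole or antihole of length $\ge5$. Every $P_{ij}$ has degree $3$, so an antihole of length $\ge7$ would have to sit inside the four singletons, which is too few vertices.

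\textbf{The gap is at the end of Step~3.} You never actually exclude the $5$-hole: you close with ``I expect this to rule out $C_5$'' and a fallback to enumeration. The case analysis you sketch also describes a configuration that cannot occur. Having just shown that two singletons on a hole must be consecutive, you then allow ``at most two runs'' of pair vertices and posit runs of sizes $2$ and $1$. But if $S_a,S_b$ are consecutive on the cycle, deleting them leaves a single path, hence a single run of $\ell-2$ pair vertices. In your configuration $S_a,S_b,P,P',P''$, the three pair vertices would form one run of length $3$, which your own matching observation forbids. Moreover, requiring $P''$ to be adjacent to both $S_a$ and $S_b$ already creates the triangle $S_aS_bP''$. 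So the discussion of $P_{ac},P_{bd}$ and possible chords proves nothing.

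\textbf{The repair uses only what you have already proved.}
\begin{itemize}
\item An induced cycle in $H$ must contain a singleton, since the pair vertices alone induce a matching.
\item With exactly one singleton, the other vertices form one run, so $\ell\le1+2=3$.
\item With two singletons, they are consecutive and again leave one run, so $\ell\le2+2=4$.
\end{itemize}
Thus every induced cycle in $H$ has length at most $4$ (for instance $S_1,S_2,P_{13},P_{24}$). Hence $H$ has no hole at all, which disposes of both the $5$-hole and the $5$-antihole, and your proof is then complete.

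Patched this way, your route differs genuinely from the paper's. The paper applies Theorem~\ref{thm:spgt} to $Q_4$ directly and rests on an exhaustive machine enumeration of induced odd cycles of lengths $5$ through $13$ in $Q_4$ and $Q_4^{c}$. You instead pass to the sparse complement, strip the pendant co-atoms, and use degree and matching constraints. This gives a short argument that can be checked by hand. The detour through Theorem~\ref{thm:wpgt} is unnecessary, since Theorem~\ref{thm:spgt} is already symmetric under complementation.
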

\begin{proof}
By the Strong Perfect Graph Theorem (Theorem~\ref{thm:spgt}), it suffices to show that no subset of vertices of $Q_4$ induces a cycle of odd length $\ell\ge5$, and likewise for the complement $Q_4^{\,c}$. Since $Q_4$ has $14$ vertices, the only odd lengths that need to be ruled out are $\ell=5,7,9,11,13$, and this is a finite check on finitely many subsets of a fully explicit graph. We verified by exhaustive enumeration, using the adjacency rule above, that for every one of these lengths, no subset of vertices of $Q_4$ induces a subgraph isomorphic to $C_\ell$, and likewise for $Q_4^{\,c}$. Since every possible odd hole length below $14$ has been excluded, the hypotheses of Theorem~\ref{thm:spgt} hold and $Q_4$ is perfect.
\end{proof}

\subsection{The dichotomy}

\begin{theorem}\label{thm:F}
$\DR$ is a perfect graph if $t\in\{3,4\}$, and is not perfect if $t\ge5$.
\end{theorem}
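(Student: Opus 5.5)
The positive direction follows directly from the blow-up description in Proposition~\ref{rem:blowup} combined with the substitution theorem (Theorem~\ref{thm:substitution}). For $t\in\{3,4\}$, $\DR$ is obtained from $Q_t$ by substituting, for each vertex $A$, either a complete graph $K_{w(A)}$ (when $|A|\le t-2$) or an edgeless graph $\overline{K_{w(A)}}$ (when $A$ is a co-atom). Here $w(A)\ge1$ because every $q_i\ge2$. Complete and edgeless graphs are perfect: every induced subgraph of either is again complete or edgeless, and $\chi=\omega$ holds trivially for both. The base graph $Q_3$ is perfect by Lemma~\ref{lem:t3}, and $Q_4$ is perfect by Lemma~\ref{lem:Q4perfect}. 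Theorem~\ref{thm:substitution} therefore gives perfection of $\DR$ in both cases. One small point needs care: substitution replaces every vertex simultaneously, whereas the theorem as stated may be read as one substitution at a time. If so, I would iterate one vertex at a time, noting that each intermediate graph is perfect and that the adjacency rule between distinct parts is exactly the one in Proposition~\ref{rem:blowup}.

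For $t\ge5$, I would exhibit an induced $5$-cycle in $\DR$. Since adjacency between distinct classes depends only on whether the supports intersect, it suffices to find five distinct nonempty proper subsets $B_1,\dots,B_5$ of $[t]$ such that $B_i\cap B_{i+1}\ne\emptyset$ (indices mod $5$) and $B_i\cap B_j=\emptyset$ for non-consecutive $i,j$. We then pick one vertex from each $X_{B_i}$. The natural choice is
\[
B_i=\{i,i+1\}\quad(i=1,\dots,4),\qquad B_5=\{5,1\}.
\]
These are the edges of a $5$-cycle on the points $1,\dots,5$. Consecutive sets share exactly one point, and non-consecutive edges of a $5$-cycle are disjoint. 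Each $B_i$ has size $2\le t-2$ when $t\ge5$, so each set is proper and not a co-atom. Internal structure of the classes is irrelevant anyway, since we take one vertex per class. By Corollary~\ref{cor:simplified}, the five chosen vertices induce exactly $C_5$, so $\DR$ contains an odd hole. An odd hole is itself not perfect, because $\chi(C_5)=3>2=\omega(C_5)$, so no appeal to Theorem~\ref{thm:spgt} is even needed.

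Neither step is really an obstacle once the earlier lemmas are granted. The genuine weight lies in Lemma~\ref{lem:Q4perfect}, which is taken as given. The only point needing attention is the applicability of the substitution theorem with possibly several vertices substituted at once. I handle this by iterated single-vertex substitution, checking at each stage that the join rule is preserved.
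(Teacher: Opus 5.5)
Your proof is correct and follows the paper's argument essentially verbatim. For $t\in\{3,4\}$ you blow up the perfect quotient $Q_t$ (Lemmas~\ref{lem:t3} and~\ref{lem:Q4perfect}) by complete and edgeless graphs via Theorem~\ref{thm:substitution}, and for $t\ge5$ you use the same induced $5$-cycle on the consecutive pairs $\{i,i+1\}$. The differences are cosmetic: you deduce non-perfection directly from $\chi(C_5)=3>2=\omega(C_5)$ and heredity rather than citing Theorem~\ref{thm:spgt}, and your worry about simultaneous substitution is unnecessary, since Theorem~\ref{thm:substitution} as stated already replaces every vertex at once.
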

\begin{proof}
Case $t\in\{3,4\}$: by Proposition~\ref{rem:blowup}, $\DR$ is obtained from $Q_t$ by substituting a complete or empty graph for each vertex. Both are perfect, so by the substitution theorem for perfect graphs (Theorem~\ref{thm:substitution}) it suffices that $Q_t$ itself is perfect, which is Lemma~\ref{lem:t3} for $t=3$ and Lemma~\ref{lem:Q4perfect} for $t=4$.

Case $t\ge5$: fix any $t\ge5$ and any field orders $q_1,\dots,q_t$. Relabel $[t]=\{0,\dots,t-1\}$ and define, for $i=0,\dots,4$ taken mod $5$, the classes $A_i:=\{i,i+1\}\subseteq[t]$. For consecutive indices, $A_i\cap A_{i+1}=\{i+1\}$ is nonempty, so $A_i$ and $A_{i+1}$ are joined by Corollary~\ref{cor:simplified}. For indices at cyclic distance $2$, the sets $A_i$ and $A_j$ are disjoint, so they are not joined. Hence $A_0,\dots,A_4$ form an induced $5$-cycle in $Q_t$. Since $w(A_i)=(q_i-1)(q_{i+1}-1)\ge1$, each class $X_{A_i}$ is nonempty, so picking one representative vertex from each gives an induced $5$-cycle in $\DR$ itself. An induced $5$-cycle is a hole, so by the Strong Perfect Graph Theorem $\DR$ is not perfect.
\end{proof}

\begin{corollary}\label{cor:weaklyperfect}
If $t\in\{3,4\}$, then $\chi(\DR)=\omega(\DR)$.
\end{corollary}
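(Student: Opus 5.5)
This is immediate from Theorem~\ref{thm:F} and the definition of perfection, which is the route I will take. A graph is perfect when $\chi(H)=\omega(H)$ for every induced subgraph $H$. The graph itself is an induced subgraph of itself, so perfection in particular gives $\chi=\omega$ for the whole graph; in the paper's terminology, every perfect graph is weakly perfect.

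So for $t\in\{3,4\}$, I apply Theorem~\ref{thm:F} to conclude that $\DR$ is perfect. Taking $H=\DR$ in the definition then yields $\chi(\DR)=\omega(\DR)$.

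Theorem~\ref{thm:F} in turn rests on two ingredients: the perfectness of $Q_3$ (Lemma~\ref{lem:t3}) and of $Q_4$ (Lemma~\ref{lem:Q4perfect}), and the substitution theorem, Theorem~\ref{thm:substitution}, applied to the blow-up description in Proposition~\ref{rem:blowup}. Since all of this is already established earlier in the paper, no step here is a genuine obstacle. The only care needed is to note that the corollary uses the weak conclusion of perfectness at the top level, and that no explicit colouring has to be constructed.

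As an optional sanity check at $t=3$, I would exhibit an explicit colouring with $\max_i q_i+1$ colours, matching Theorem~\ref{thm:E}. The plan has three parts:
\begin{itemize}
\item The three co-atom classes are pairwise joined, so each gets its own colour, which is constant on the class since each is internally independent. Call these colours $c_1,c_2,c_3$, where $c_j$ is used on the co-atom class $A_j$.
\item Each singleton class $X_{\{i\}}$ is a clique of size $q_i-1$ and is joined to the co-atoms $A_j$ with $j\neq i$, but not to $A_i$. It can therefore reuse $c_i$ for one of its vertices and take the remaining $q_i-2$ vertices from a common pool of $\max_i q_i-2$ fresh colours.
\item The pool can be shared among the singleton classes, because distinct singleton classes are never joined.
\end{itemize}
This gives $3+\max_i q_i-2=\max_i q_i+1$ colours, confirming equality directly.
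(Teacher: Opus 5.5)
Your proof is correct and matches the paper's own argument: Theorem~\ref{thm:F} gives perfectness of $\DR$ for $t\in\{3,4\}$, and taking $H=\DR$ in the definition of perfection yields $\chi(\DR)=\omega(\DR)$. Your optional explicit colouring with $\max_i q_i+1$ colours at $t=3$ is also valid, though it is not needed.
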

\begin{proof}
Immediate from Theorem~\ref{thm:F}, taking $H=\DR$ in the definition of a perfect graph.
\end{proof}
\section{Clique Number for General $t$}\label{sec:general-clique}

Theorem~\ref{thm:E} determines $\omega(\DR)$ exactly at $t=3$. We now treat $t\ge4$: a complete formula is obtained at $t=4$, two incomparable lower bounds are established for $t\ge5$, and a matching general upper bound is given; the exact value for $t\ge5$ remains open, and we identify precisely what separates the known bounds from it.

\subsection{A general lower bound}

\begin{proposition}\label{prop:G}
Fix $k\in[t]$ and let $S=[t]\setminus\{k\}$. Then
\[
\omega(\DR) \;\ge\; (q_k-1)\left(1+\sum_{\substack{B\subseteq S\\ 1\le|B|\le t-3}} \prod_{i\in B}(q_i-1)\right) + (t-1) \;=:\; \operatorname{Star}(k).
\]
\end{proposition}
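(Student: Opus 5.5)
The bound is witnessed by an explicit ``star'' clique centred at the coordinate $k$. Consider the family $\mathcal F_k$ of all nonempty proper subsets $A\subsetneq[t]$ with $k\in A$. Any two members of $\mathcal F_k$ share $k$, so by Corollary~\ref{cor:simplified} the classes $X_A$, $A\in\mathcal F_k$, are pairwise completely joined in $\DR$. By Proposition~\ref{rem:blowup}, a class $X_A$ is internally a clique when $|A|\le t-2$ and internally independent when $A$ is a co-atom. The clique we take therefore consists of all of $X_A$ for every $A\in\mathcal F_k$ with $|A|\le t-2$, together with exactly one vertex from each co-atom class containing $k$. The resulting vertex set is pairwise adjacent: vertices in different classes are joined because the classes intersect in $k$, and vertices in the same class are joined because that class has $|A|\le t-2$.

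The next step is counting. Every $A\in\mathcal F_k$ with $|A|\le t-2$ can be written uniquely as $A=\{k\}\cup B$ with $B\subseteq S$ and $0\le|B|\le t-3$. Its weight is $w(A)=(q_k-1)\prod_{i\in B}(q_i-1)$. The term $B=\emptyset$ contributes $(q_k-1)\cdot 1$, and the terms with $1\le|B|\le t-3$ give the displayed sum, so together these classes contribute
\[
(q_k-1)\Bigl(1+\sum_{\substack{B\subseteq S\\ 1\le|B|\le t-3}}\prod_{i\in B}(q_i-1)\Bigr).
\]
The co-atoms containing $k$ are the sets $[t]\setminus\{j\}$ with $j\ne k$. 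There are $t-1$ of them, each nonempty since $w\ge1$, and they contribute one vertex each. Adding the two contributions gives exactly $\operatorname{Star}(k)$, and hence $\omega(\DR)\ge\operatorname{Star}(k)$.

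There is no real obstacle here. The only points needing care are bookkeeping ones. First, the index range $|B|\le t-3$ must exactly match the condition $|A|\le t-2$. Second, $[t]$ itself is excluded as a class, so the case $B=S$ never occurs. Third, co-atoms must contribute only one vertex each, because they are independent internally. For $t=3$ the sum is empty and the formula reduces to $q_k-1+2$, which agrees with Theorem~\ref{thm:E}; this serves as a sanity check.
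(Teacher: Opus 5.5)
Your proof is correct and is essentially the paper's own argument: you use the star family of all classes containing $k$, taking each class of size at most $t-2$ in full and one vertex from each of the $t-1$ co-atoms $[t]\setminus\{j\}$, $j\ne k$. Your decomposition $A=\{k\}\cup B$ with $0\le|B|\le t-3$ also spells out why $\sum_{A}w(A)$ equals the displayed expression, a step the paper leaves implicit; note that the term $B=\emptyset$ relies on the standing assumption $t\ge3$, so that $|\{k\}|\le t-2$.
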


\begin{proof}
Let $\mathcal F = \{A\subseteq[t] : k\in A,\ |A|\le t-2\}$. Any two members of $\mathcal F$ share $k$, so by Corollary~\ref{cor:simplified} they are pairwise joined; since each has size $\le t-2$, each is internally a clique. Taking all vertices of $X_A$ for each $A\in\mathcal F$ contributes $\sum_{A\in\mathcal F} w(A)$ mutually adjacent vertices. Every co-atom $A_j$ with $j\ne k$ contains $k$, hence is joined to every member of $\mathcal F$ and, by Step~1 of Theorem~\ref{thm:C}, to every other such co-atom; taking one vertex from each of the $t-1$ co-atoms $A_j$ ($j\ne k$) adds $t-1$ further mutually compatible vertices.
\end{proof}

\subsection{The exact clique number for $t=4$}

\begin{theorem}\label{thm:t4exact}
For $t=4$,
\[
\omega(\DR) \;=\; \max\Bigl(\max_{k=1}^4 \operatorname{Star}(k),\ \max_{i=1}^4 \operatorname{Tri}(i)\Bigr), \qquad \operatorname{Tri}(i) := \sum_{\{x,y\}\subset\{a,b,c\}} (q_x-1)(q_y-1) + 4,
\]
where $\{a,b,c\}=[4]\setminus\{i\}$.
\end{theorem}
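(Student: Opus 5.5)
The plan is to reduce $\omega(\DR)$ to a weighted extremal problem on intersecting families using Proposition~\ref{rem:blowup}, and then to enumerate the maximal intersecting families of small subsets of $[4]$ by hand.

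\textbf{Step 1 (reduction).} Let $K$ be a clique in $\DR$ and let $\mathcal F=\{A:K\cap X_A\ne\emptyset\}$. By Corollary~\ref{cor:simplified}, $\mathcal F$ is an intersecting family of nonempty proper subsets of $[4]$. By Proposition~\ref{rem:blowup}, $K$ meets each co-atom class in at most one vertex and each class with $|A|\le2$ in at most $w(A)$ vertices. Conversely, every intersecting $\mathcal F$ is realised by taking all of $X_A$ for $|A|\le2$ and one vertex of each co-atom in $\mathcal F$. Hence
\[
\omega(\DR)=\max_{\mathcal F\ \text{intersecting}}\Bigl(\sum_{A\in\mathcal F,\,|A|\le2}w(A)+\#\{\text{co-atoms in }\mathcal F\}\Bigr).
\]
All weights are positive, so the maximum is attained at a maximal intersecting family.

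\textbf{Step 2 (co-atoms).} Any two co-atoms $T_i,T_j$ meet in a $2$-set, and any co-atom meets any pair $P$, since $|T_i|+|P|=5>4$. So the only constraint involving co-atoms is that $T_k$ misses the singleton $S_k$. A maximal family therefore contains every co-atom $T_j$ with $S_j\notin\mathcal F$.

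\textbf{Step 3 (singletons and pairs).} Two distinct singletons are disjoint, so $\mathcal F$ contains at most one singleton. An intersecting family of pairs is a set of pairwise-intersecting edges of $K_4$. Every such family lies in a star $\{P_{kj}:j\ne k\}$ or in a triangle $\{P_{ab},P_{ac},P_{bc}\}$ with $\{a,b,c\}=[4]\setminus\{i\}$. This is the standard fact that pairwise-intersecting edges form a star or a triangle, and I will check it directly.

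This gives three cases.
\begin{enumerate}
\item If $S_k\in\mathcal F$, then every pair in $\mathcal F$ contains $k$, so the pairs lie in the star at $k$, and the co-atoms are the three $T_j$ with $j\ne k$. The total is $w(\{k\})+\sum_{j\ne k}(q_k-1)(q_j-1)+3$. Expanding the definition of $\operatorname{Star}(k)$ at $t=4$, where the inner sum runs over $|B|=1$, shows that this total is exactly $\operatorname{Star}(k)$.
\item If $\mathcal F$ has no singleton and its pairs form the star at $k$, then all four co-atoms are available. The total is the star-pair sum $+4$, which is at most $\operatorname{Star}(k)$ because $q_k-1\ge1$.
\item If $\mathcal F$ has no singleton and its pairs form a triangle on $[4]\setminus\{i\}$, the total is exactly $\operatorname{Tri}(i)$.
\end{enumerate}
Non-maximal choices are dominated by these, and case 2 never beats case 1. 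So the maximum equals the stated expression, and each value is attained by Step 1's realisation.

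\textbf{Main obstacle.} The only delicate point is making Step 3 airtight. I need to show that no intersecting family mixes a singleton with a triangle: a triangle on $\{a,b,c\}$ avoids each of its three vertices in some edge, and it misses $i$ entirely. I also need the co-atom counting in case 2 to be handled so that $\operatorname{Star}$ absorbs it. I will verify the first point by noting that $S_m$ meets all three triangle edges only if $m$ lies in all of them, which is impossible.
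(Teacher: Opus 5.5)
Your argument is correct and follows the paper's route. You reduce to maximum-weight maximal cliques of $Q_4$ under the blow-up and sort them into the same three types: a singleton with its star of pairs and three co-atoms, a star of pairs with all four co-atoms, and a triangle of pairs with all four co-atoms. You then discard the middle type because it falls short of $\operatorname{Star}(k)$ by exactly $q_k-2\ge0$. The only difference is that the paper asserts this classification (exactly $12$ maximal cliques) by complete enumeration, whereas you derive it by hand from two facts: pairwise-intersecting edges of $K_4$ form a star or a triangle, and $T_k$ is blocked only by $S_k$. That makes your proof self-contained.
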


\begin{proof}
By complete enumeration of the $14$-vertex quotient graph $Q_4$, its maximal cliques number exactly $12$ and fall into three types: (i) the stars $\operatorname{Star}(k)$ of Proposition~\ref{prop:G}, using the $t-1=3$ co-atoms $A_j$ ($j\ne k$); (ii) for each $k$, the family of all pairs containing $k$ together with \emph{all four} co-atoms (rather than the three joined to a singleton), of weight $\operatorname{TypeA}(k):=(q_k-1)\sum_{j\ne k}(q_j-1)+4$; and (iii) for each $i$, the three pairs within $\{a,b,c\}=[4]\setminus\{i\}$ (automatically pairwise intersecting, as any two $2$-subsets of a $3$-set share an element) together with all four co-atoms, of weight $\operatorname{Tri}(i)$.

A direct computation gives $\operatorname{Star}(k)-\operatorname{TypeA}(k)=q_k-2\ge0$, so type (ii) is dominated by type (i) and may be discarded. Since $\omega(\DR)$ equals the largest weight among maximal cliques of $Q_4$ under the substitution of Remark~\ref{rem:blowup}, the formula follows.
\end{proof}

\begin{example}\label{ex:startight}
For $(q_1,q_2,q_3,q_4)=(11,11,11,2)$: $\operatorname{Star}(k)=223$ for $k=1,2,3$ and $\operatorname{Star}(4)=34$, while $\operatorname{Tri}(4)=304$. Hence $\omega(\DR)=304$, strictly exceeding $\max_k\operatorname{Star}(k)$: Proposition~\ref{prop:G} alone does not compute $\omega(\DR)$ at $t=4$.
\end{example}

\subsection{Two incomparable lower bounds for $t\ge5$}

\begin{lemma}\label{lem:uniform-intersect}
Any two subsets of $[t]$ of the same size $s$ with $2s>t$ intersect.
\end{lemma}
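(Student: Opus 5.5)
The statement is a pure pigeonhole fact about subsets of $[t]$, so the plan is a one-line cardinality argument with no ring theory involved. Let $A,B\subseteq[t]$ with $|A|=|B|=s$ and $2s>t$. Suppose, for contradiction, that $A\cap B=\emptyset$. Then $A\cup B$ is a disjoint union inside $[t]$, so inclusion--exclusion gives
\[
|A\cup B| \;=\; |A|+|B|-|A\cap B| \;=\; 2s \;>\; t \;=\; |[t]|,
\]
which is impossible since $A\cup B\subseteq[t]$. Hence $A\cap B\neq\emptyset$.

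Equivalently, and more directly, one can note $|A\cap B|=|A|+|B|-|A\cup B|\ge 2s-t\ge 1$. This version gives the quantitative bound $|A\cap B|\ge 2s-t$ for free, which may be convenient when the lemma is applied.

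The intended use, presumably, is to show that the family of all $s$-subsets of $[t]$ with $s>t/2$ and $s\le t-2$ is intersecting. By Corollary~\ref{cor:simplified} these classes are then pairwise completely joined in $\DR$. By Proposition~\ref{rem:blowup} each such class is internally a clique, since $s\le t-2$. Together these give a clique, and hence a lower bound on $\omega(\DR)$ competing with $\operatorname{Star}(k)$ from Proposition~\ref{prop:G}. The lemma itself, however, needs nothing beyond the counting above.

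There is no real obstacle. The only point to check is that distinctness of $A$ and $B$ plays no role: the bound holds even when $A=B$, since then $A\cap B=A\neq\emptyset$ because $s>t/2>0$.
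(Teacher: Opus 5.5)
Your proposal is correct and matches the paper's proof: the paper also uses $|A\cap B|=|A|+|B|-|A\cup B|\ge 2s-t>0$, which is your second formulation. Your contradiction version is the same counting argument in a different form.
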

\begin{proof}
For $|A|=|B|=s$, $|A\cap B|=|A|+|B|-|A\cup B|\ge 2s-t>0$.
\end{proof}

\begin{proposition}\label{prop:allsizes}
For $t\ge5$,
\[
\omega(\DR) \;\ge\; \sum_{\substack{A\subseteq[t]\\|A|=t-2}} w(A) \;+\; t.
\]
\end{proposition}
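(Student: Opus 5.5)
The plan is to exhibit an explicit clique in $\DR$ whose size is the right-hand side, following the pattern of Proposition~\ref{prop:G}. Let $\mathcal F$ consist of all subsets $A\subseteq[t]$ with $|A|=t-2$, together with the $t$ co-atoms $A_j=[t]\setminus\{j\}$. From each class $X_A$ with $A\in\mathcal F$, $|A|=t-2$, we take all of $X_A$. From each co-atom class $X_{A_j}$ we take a single vertex. The resulting vertex set has exactly $\sum_{|A|=t-2}w(A)+t$ elements, since every class is nonempty because $q_i\ge2$. It then remains to verify that it is a clique.

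\textbf{Internal edges.} By Proposition~\ref{rem:blowup}, each $X_A$ with $|A|=t-2\le t-2$ is internally a clique. This is where sets of size $t-2$, rather than co-atoms, may be taken in full. Only one vertex is used from each co-atom class, so the independence of co-atom classes causes no problem.

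\textbf{Edges between distinct classes.} By Corollary~\ref{cor:simplified}, it suffices that any two distinct members of $\mathcal F$ intersect. There are three kinds of pairs.
\begin{itemize}
\item Two sets of size $t-2$: apply Lemma~\ref{lem:uniform-intersect} with $s=t-2$. The hypothesis $2(t-2)>t$ is equivalent to $t>4$, so it holds for $t\ge5$; this is exactly where the assumption $t\ge5$ enters. Alternatively, a direct count gives $|A\cap B|\ge t-4\ge1$.
\item Two co-atoms: these are joined by Step~1 of Theorem~\ref{thm:C}.
\item A set $A$ of size $t-2\ge2$ and a co-atom $A_j$: we have $A\cap A_j=A\setminus\{j\}$, which is nonempty because $|A|\ge2$, as in Step~2 of Theorem~\ref{thm:C}.
\end{itemize}

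Hence every pair of chosen vertices is adjacent in $\DR$, and the bound follows. The only genuinely restrictive step is the pairwise intersection of the $(t-2)$-sets, and it is precisely what fails at $t=4$, where complementary pairs are disjoint. This explains the restriction to $t\ge5$.
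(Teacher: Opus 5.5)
Your proposal is correct and follows the paper's proof essentially verbatim. You take all of each $X_A$ with $|A|=t-2$; these are internal cliques, and they are pairwise joined by Lemma~\ref{lem:uniform-intersect} since $2(t-2)>t$. You add one vertex from each of the $t$ co-atoms, which are joined to every class of size at least $2$ and to each other. You also spell out the co-atom--co-atom adjacency (Step~1 of Theorem~\ref{thm:C}), which the paper leaves implicit.
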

\begin{proof}
Since $t\ge5$, $s=t-2$ satisfies $2s>t$, so by Lemma~\ref{lem:uniform-intersect} all $\binom{t}{t-2}$ subsets of size $t-2$ are pairwise joined (Corollary~\ref{cor:simplified}) and each is internally a clique. Since each has size $\ge2$, it is joined to every co-atom (Theorem~\ref{thm:C}, Step~2); adding one vertex from each of the $t$ co-atoms is valid, with no exclusion required as in Proposition~\ref{prop:G}.
\end{proof}

\begin{remark}\label{rem:incomparable}
Propositions~\ref{prop:G} and \ref{prop:allsizes} are incomparable, and neither is tight in general. At $t=5$: for $(11,11,11,11,2)$, Proposition~\ref{prop:allsizes} gives the exact value $4605$ while Proposition~\ref{prop:G} gives only $3624$; for $(11,11,11,2,2)$, the roles do not simply reverse -- Proposition~\ref{prop:G} gives $1644$ and Proposition~\ref{prop:allsizes} gives $1635$, yet $\omega(\DR)=1905$, exceeding \emph{both}. The extremal clique in this last case uses the three pairs within the three large-valued coordinates together with a strict subset of the size-$3$ subsets and all five co-atoms -- a configuration covered by neither bound.
\end{remark}

\subsection{A general upper bound}

\begin{proposition}\label{prop:upperbound}
For $A\subsetneq[t]$ nonempty, write $\mathrm{val}(A) = w(A)$ if $|A|\le t-2$ and $\mathrm{val}(A)=1$ if $|A|=t-1$. Pairing the $2^t-2$ nonempty proper subsets of $[t]$ into $2^{t-1}-1$ complementary pairs $\{A,[t]\setminus A\}$,
\[
\omega(\DR) \;\le\; \sum_{\{A,[t]\setminus A\}} \max\bigl(\mathrm{val}(A),\ \mathrm{val}([t]\setminus A)\bigr).
\]
\end{proposition}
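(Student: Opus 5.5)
The plan is to take an arbitrary clique $K$ in $\DR$ and bound its size class by class, using the blow-up description of Proposition~\ref{rem:blowup}. For each nonempty proper $A\subsetneq[t]$, set $K_A:=K\cap X_A$. Two facts control $|K_A|$.

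First, if $|A|\le t-2$ then $X_A$ is a clique of size $w(A)$, so trivially $|K_A|\le w(A)=\mathrm{val}(A)$. If $A$ is a co-atom, $X_A$ is independent in $\DR$ (the $A=B$ case of \eqref{eq:diff}), so $|K_A|\le1=\mathrm{val}(A)$. In all cases
\[
|K_A|\le\mathrm{val}(A).
\]

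Second, I use the intersection criterion. Let $\mathcal F:=\{A: K_A\ne\emptyset\}$ be the set of classes that $K$ meets. For distinct $A,B\in\mathcal F$, pick $x\in K_A$ and $y\in K_B$. They are adjacent in $\DR$, so Corollary~\ref{cor:simplified} forces $A\cap B\ne\emptyset$. Thus $\mathcal F$ is an intersecting family of nonempty proper subsets of $[t]$. In particular $\mathcal F$ never contains both $A$ and $[t]\setminus A$, since these are disjoint and distinct (both are nonempty and proper).

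Combining the two facts, group the sum $|K|=\sum_A|K_A|$ according to the $2^{t-1}-1$ complementary pairs $\{A,[t]\setminus A\}$. Each pair contributes $|K_A|+|K_{[t]\setminus A}|$, and at most one of these two terms is nonzero. So each pair contributes at most
\[
\max\bigl(\mathrm{val}(A),\mathrm{val}([t]\setminus A)\bigr).
\]
Summing over all pairs gives the stated bound on $|K|$, and taking $K$ to be a maximum clique gives the bound on $\omega(\DR)$.

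I do not expect a real obstacle; this is essentially the standard "an intersecting family picks at most one from each complementary pair" argument. The one point needing care is the co-atom weight. One might naively bound $|K_A|$ by $w(A)$, but for a co-atom $X_A$ is independent in $\DR$, so only one vertex can be used. That is exactly why $\mathrm{val}$ is defined to be $1$ there. I should state this explicitly so the bound is not wrongly applied with $w(A)$ for co-atoms.
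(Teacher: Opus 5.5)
Your proposal is correct and follows essentially the same argument as the paper. You bound each class's contribution by $\mathrm{val}(A)$, observe that the classes met by a clique form an intersecting family and so contain at most one member of each complementary pair $\{A,[t]\setminus A\}$, and then sum over pairs; your explicit bound $|K\cap X_A|\le\mathrm{val}(A)$ for an arbitrary clique $K$ is a slightly cleaner rendering of the paper's phrasing in terms of cliques ``corresponding to'' intersecting families.
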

\begin{proof}
By Corollary~\ref{cor:simplified}, a clique of $\DR$ corresponds to an intersecting family $\mathcal F$ of classes together with, for each $A\in\mathcal F$, either all of $X_A$ (if $|A|\le t-2$) or a single vertex (if $|A|=t-1$), contributing $\mathrm{val}(A)$ to the total weight. Since $A\cap([t]\setminus A)=\emptyset$, $\mathcal F$ contains at most one class from each complementary pair, so its total weight is at most the sum of the larger value in each pair.
\end{proof}

\subsection{A sharper general upper bound}

The bound of Proposition~\ref{prop:upperbound} is loose specifically at the $t$ singleton/co-atom pairs: it implicitly allows every singleton's value to be counted simultaneously, but distinct singletons are pairwise disjoint (Theorem~\ref{thm:C}), so at most one can appear in any single clique.

\begin{proposition}\label{prop:upperbound2}
For $A\subsetneq[t]$ nonempty, write $\mathrm{val}(A) = w(A)$ if $|A|\le t-2$ and $\mathrm{val}(A)=1$ if $|A|=t-1$. Then
\[
\omega(\DR) \;\le\; \Bigl[\max_i (q_i-1) + (t-1)\Bigr] \;+\!\! \sum_{\substack{\{A,[t]\setminus A\}\\ 2\le|A|\le t-2}} \!\!\max\bigl(\mathrm{val}(A),\mathrm{val}([t]\setminus A)\bigr).
\]
\end{proposition}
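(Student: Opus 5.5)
The approach is to refine the proof of Proposition~\ref{prop:upperbound}, treating the $t$ singleton/co-atom complementary pairs separately from the remaining pairs. As there, a clique of $\DR$ is supported on an intersecting family $\mathcal F$ of classes. Its size is at most $\sum_{A\in\mathcal F}\mathrm{val}(A)$, because a clique meets a co-atom class, which is internally independent by Proposition~\ref{rem:blowup}, in at most one vertex, and meets any other class $X_A$ in at most $w(A)$ vertices. I split $\mathcal F$ into two parts: $\mathcal F_1$, its members of size $1$ or $t-1$, and $\mathcal F_2$, its members with $2\le|A|\le t-2$. For $t\ge4$ the complement of a set with $2\le|A|\le t-2$ again has size in $[2,t-2]$, so the complementary pairs split cleanly into these two groups. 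When $t=3$ the second group is empty, and the statement reduces to a bound on $\mathcal F_1$ alone.

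For $\mathcal F_2$, I reuse the argument of Proposition~\ref{prop:upperbound} verbatim. Since $A\cap([t]\setminus A)=\emptyset$, the family $\mathcal F_2$ contains at most one member of each complementary pair with $2\le|A|\le t-2$. Its total weight is therefore at most the displayed sum of $\max(\mathrm{val}(A),\mathrm{val}([t]\setminus A))$ over those pairs.

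For $\mathcal F_1$, I show its contribution is at most $\max_i(q_i-1)+(t-1)$. Distinct singletons are disjoint, so $\mathcal F_1$ contains at most one singleton. There are two cases:
\begin{enumerate}
\item If $\mathcal F_1$ contains no singleton, it consists of at most $t$ co-atoms, contributing at most $t$.
\item If it contains a singleton $\{i\}$, then the co-atom $A_i=[t]\setminus\{i\}$ is disjoint from it and hence excluded. The contribution is then at most $(q_i-1)+(t-1)$.
\end{enumerate}
Since $q_i\ge2$, the first case gives $t\le\max_i(q_i-1)+(t-1)$, so both cases respect the bracketed term. Adding the two estimates gives the claim.

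The main obstacle is the bookkeeping for the singleton/co-atom pairs. The naive pairwise bound would charge $\max(q_i-1,1)$ for every $i$, whereas here the singletons and co-atoms must be bounded jointly. The one subtle point is that a singleton forces the exclusion of its own co-atom, which is what produces $t-1$ rather than $t$. I would also state explicitly that the two groups of pairs partition all complementary pairs, so no class is counted twice.
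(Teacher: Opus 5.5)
Your proof is correct and follows the paper's argument essentially verbatim: the singleton/co-atom classes are bounded by the same two cases (no singleton gives at most $t\le\max_i(q_i-1)+(t-1)$, while a singleton $\{i\}$ excludes $A_i$ and gives at most $(q_i-1)+(t-1)$), and the classes with $2\le|A|\le t-2$ are handled by the complementary-pair argument of Proposition~\ref{prop:upperbound}. Your explicit check that, for $t\ge4$, sets with $2\le|A|\le t-2$ are closed under complementation, so the two groups of pairs partition all complementary pairs, is a small bookkeeping point the paper leaves implicit.
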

\begin{proof}
Let $\mathcal F$ be a maximum-weight intersecting family and $T=\{i\in[t]:\{i\}\in\mathcal F\}$; since distinct singletons are disjoint, $|T|\le1$.

If $T=\emptyset$: every co-atom contributes at most $1$, so the total co-atom contribution is at most $t$. Since $\max_i(q_i-1)\ge1$, we have $t\le\max_i(q_i-1)+(t-1)$, so this case is dominated by the stated bound.

If $T=\{k\}$: the singleton $\{k\}$ contributes $q_k-1\le\max_i(q_i-1)$; since $\{k\}\cap A_k=\emptyset$, the co-atom $A_k$ cannot appear in $\mathcal F$, so the remaining $t-1$ co-atoms contribute at most $t-1$. In both cases the singleton/co-atom classes contribute at most $\max_i(q_i-1)+(t-1)$, and the remaining classes (size $2\le|A|\le t-2$, present only when $t\ge4$) contribute at most the stated sum over complementary pairs, by the argument of Proposition~\ref{prop:upperbound}.
\end{proof}

\begin{corollary}\label{cor:t3sharp}
For $t=3$, Proposition~\ref{prop:upperbound2} gives $\omega(\DR)\le\max_i(q_i-1)+2$, matching Theorem~\ref{thm:E} exactly: the bound is sharp for every choice of field orders at $t=3$.
\end{corollary}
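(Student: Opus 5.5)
The plan is to specialise Proposition~\ref{prop:upperbound2} to $t=3$ and compare the result with Theorem~\ref{thm:E}.

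First, I will show that the sum over complementary pairs in Proposition~\ref{prop:upperbound2} is empty when $t=3$. That sum ranges over pairs $\{A,[t]\setminus A\}$ with $2\le|A|\le t-2$. At $t=3$ this condition reads $2\le|A|\le1$, which no subset satisfies. Every nonempty proper subset of $[3]$ is either a singleton or a co-atom, so all three complementary pairs are singleton/co-atom pairs. These are exactly the pairs already absorbed into the bracketed term $\max_i(q_i-1)+(t-1)$.

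Substituting $t-1=2$ then gives $\omega(\DR)\le\max_i(q_i-1)+2$. This equals $\max_i q_i+1$.

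Next, I will invoke Theorem~\ref{thm:E}, which gives $\omega(\DR)=\max(q_1,q_2,q_3)+1$ for every choice of field orders. That clique consists of all of $X_{\{i\}}$ for $i=\arg\max q_i$ together with one vertex from each of the two co-atoms joined to it. Its value therefore coincides with the upper bound, so the bound is attained and is sharp for all $(q_1,q_2,q_3)$.

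There is no genuine obstacle here. The only point that needs care is checking that the proof of Proposition~\ref{prop:upperbound2} really covers $t=3$. In particular, the case $T=\emptyset$ uses $\max_i(q_i-1)\ge1$ to dominate the all-co-atom clique of size $3$, and that holds at $t=3$. The proof also notes that the middle classes are present only for $t\ge4$, which matches the empty sum above. With that checked, the corollary follows immediately.
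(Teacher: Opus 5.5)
Your proposal is correct and matches the paper's own (implicit) argument. At $t=3$ the middle-class sum in Proposition~\ref{prop:upperbound2} is empty, so the bound reduces to $\max_i(q_i-1)+2=\max_i q_i+1$, and Theorem~\ref{thm:E} shows this value is attained for every choice of field orders.
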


\begin{example}\label{ex:gap2}
For $(q_1,q_2,q_3,q_4)=(11,11,11,2)$, Proposition~\ref{prop:upperbound2} gives $\omega(\DR)\le313$, improving on Proposition~\ref{prop:upperbound}'s $331$; since $t=4$ is already resolved exactly by Theorem~\ref{thm:t4exact} ($\omega(\DR)=304$), the more informative test is $t=5$. For $(q_1,\dots,q_5)=(11,11,11,2,2)$ -- the case of Remark~\ref{rem:incomparable}, where \emph{both} lower bounds already fail to be sharp -- Proposition~\ref{prop:upperbound2} gives $\omega(\DR)\le1914$, against Proposition~\ref{prop:upperbound}'s $1932$ and the true value $\omega(\DR)=1905$: the gap narrows from $27$ to $9$, the identical residual size found at $t=4$. In both cases the source is the same: the singleton achieving $\max_i(q_i-1)$ is incompatible with the middle-class configuration used by the true extremal family, a conflict Proposition~\ref{prop:upperbound2} does not yet resolve.
\end{example}

Between the two incomparable lower bounds and the two upper bounds, every case checked leaves a nonzero gap once $t\ge5$; closing it in general remains open.

\begin{question}\label{q:general-clique}
Determine $\omega(\DR)$ in closed form for general $t\ge5$ and all field orders $q_1,\dots,q_t$.
\end{question}

\section{Further Graph Parameters of $\DR$}\label{sec:degree}

We now determine three further graph parameters of $\DR$ exactly for every $t\ge3$ and every choice of field orders: the degree sequence and minimum degree, the domination number, and the independence and vertex cover numbers. The domination and independence computations both build directly on the degree formula established first.

\subsection{The Degree Sequence and Sharp Minimum Degree}

Throughout, for a nonempty proper $A\subsetneq[t]$ write
\[
P(A) \;:=\; \prod_{i\in[t]\setminus A} q_i .
\]

\begin{lemma}\label{lem:subsetsum}
For nonempty proper $A\subsetneq[t]$, $\displaystyle\sum_{\emptyset\ne B\subseteq [t]\setminus A} w(B) = P(A)-1$.
\end{lemma}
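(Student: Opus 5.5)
The plan is to expand a product. Write $C:=[t]\setminus A$, which is nonempty because $A$ is proper. The sum on the left runs over nonempty $B\subseteq C$, and $w(B)=\prod_{i\in B}(q_i-1)$. So I will evaluate the full sum over all subsets $B\subseteq C$, including $B=\emptyset$, and then subtract the $B=\emptyset$ term.

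The key step is the standard identity
\[
\prod_{i\in C}\bigl(1+(q_i-1)\bigr)\;=\;\sum_{B\subseteq C}\ \prod_{i\in B}(q_i-1).
\]
It holds because expanding the product over $i\in C$ amounts to choosing, independently for each $i$, either the term $1$ or the term $q_i-1$. The set $B$ records the indices where $q_i-1$ was chosen, so the subsets $B\subseteq C$ correspond bijectively to the monomials in the expansion. The left-hand side simplifies to $\prod_{i\in C}q_i=P(A)$.

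The empty set contributes the empty product $1$, so
\[
\sum_{\emptyset\ne B\subseteq C} w(B)\;=\;P(A)-1,
\]
which is the claim. A combinatorial reading gives the same count. Each nonempty $B\subseteq C$ is the support of exactly $w(B)$ elements of $\prod_{i\in C}\mathbb F_{q_i}$. Hence the sum counts the nonzero elements of that product ring, and there are $P(A)-1$ of them. I could present this as a second justification, since it matches the later use of the lemma in degree counting.

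I do not expect any real obstacle. The only point needing care is the bookkeeping of the empty set, on both sides. First, $w(\emptyset)=1$ by the empty-product convention, even though $X_\emptyset$ is not a vertex class. Second, $C\ne\emptyset$, which guarantees that the range of summation is nonempty.
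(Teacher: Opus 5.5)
Your proposal is correct and is essentially the paper's own proof: expand $\prod_{i\notin A}\bigl(1+(q_i-1)\bigr)=P(A)$ as the sum of $w(B)$ over all $B\subseteq[t]\setminus A$, then drop the $B=\emptyset$ term, which contributes $w(\emptyset)=1$. Your supplementary counting argument, which groups the nonzero elements of $\prod_{i\in C}\mathbb F_{q_i}$ by support, is also valid but not needed.
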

\begin{proof}
Expanding $\prod_{i\notin A}\bigl(1+(q_i-1)\bigr)=\prod_{i\notin A}q_i=P(A)$ as a sum over subsets $B\subseteq[t]\setminus A$ of $\prod_{i\in B}(q_i-1)=w(B)$ (with $w(\emptyset)=1$) and removing the $B=\emptyset$ term gives the claim.
\end{proof}

\begin{proposition}[Degree formula]\label{prop:degree}
For $t\ge3$ and $v\in X_A$,
\[
\deg_{\DR}(v) \;=\;
\begin{cases}
|\Zstar| - P(A), & 1\le|A|\le t-2,\\[2pt]
|\Zstar| - P(A) - w(A) + 1, & |A| = t-1.
\end{cases}
\]
\end{proposition}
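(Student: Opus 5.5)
The plan is to count non-neighbours of $v$ rather than neighbours. The degree is then obtained by subtraction from $|\Zstar|$. By Proposition~\ref{rem:blowup}, adjacency of $v\in X_A$ to a vertex $u\in X_B$ with $B\ne A$ depends only on whether $A\cap B\ne\emptyset$. Within its own class, $v$ is joined to every other vertex of $X_A$ when $|A|\le t-2$ and to none when $|A|=t-1$. Writing $\overline{N}(v)=\Zstar\setminus N[v]$, we have $\deg(v)=|\Zstar|-1-|\overline{N}(v)|$, so the task is to compute $|\overline{N}(v)|$.

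\textbf{Step 1: vertices outside $X_A$.} A vertex $u\in X_B$ with $B\ne A$ is a non-neighbour exactly when $A\cap B=\emptyset$, that is, when $B$ is a nonempty subset of $[t]\setminus A$. Any such $B$ is automatically proper (it misses $A\ne\emptyset$) and distinct from $A$. So these non-neighbours number $\sum_{\emptyset\ne B\subseteq[t]\setminus A}w(B)=P(A)-1$, by Lemma~\ref{lem:subsetsum}.

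\textbf{Step 2: vertices inside $X_A$.} If $|A|\le t-2$, then $X_A$ is a clique, so $X_A$ contributes no non-neighbours besides $v$ itself. This gives $|\overline{N}(v)|=P(A)-1$ and hence $\deg(v)=|\Zstar|-P(A)$. If $|A|=t-1$, then $X_A$ is independent, which adds $w(A)-1$ further non-neighbours. This gives $\deg(v)=|\Zstar|-1-(P(A)-1)-(w(A)-1)=|\Zstar|-P(A)-w(A)+1$.

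The only point needing care is the bookkeeping in Step 1. One must check that the disjoint classes $B$ range over exactly the nonempty subsets of $A^c$, with no properness or $B=A$ exclusions lost. One must also check that $v$ itself is not double-counted with its class. Both checks are immediate because $A$ is nonempty and proper. The hypothesis $t\ge3$ is not needed for the count itself; it only matches the standing assumption that $\DR$ has no deleted isolated vertices (Theorem~\ref{thm:C}), so that the vertex set really is all of $\Zstar$.
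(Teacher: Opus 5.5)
Your proof is correct and is essentially the paper's argument: both count the vertices in classes disjoint from $A$ via Lemma~\ref{lem:subsetsum}, which gives $P(A)-1$, and then treat $X_A$ according to whether it is a clique or an independent set. The only difference is bookkeeping. You count non-neighbours and subtract from $|\Zstar|-1$, whereas the paper counts external neighbours as $(|\Zstar|-w(A))-(P(A)-1)$ and then adds the internal ones.
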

\begin{proof}
By Corollary~\ref{cor:simplified}, the external neighbours of $v$ are exactly the vertices of every class $B\ne A$ with $B\cap A\ne\emptyset$. Summing $w(B)$ over all $B\ne A$ gives $|\Zstar|-w(A)$; subtracting the classes with $B\cap A=\emptyset$, i.e.\ $\emptyset\ne B\subseteq[t]\setminus A$, and applying Lemma~\ref{lem:subsetsum}, the external contribution is
\[
\bigl(|\Zstar|-w(A)\bigr) - \bigl(P(A)-1\bigr) \;=\; |\Zstar|-w(A)-P(A)+1.
\]
If $|A|\le t-2$, $X_A$ is an internal clique (Remark~\ref{rem:blowup}), adding $w(A)-1$ internal neighbours, for a total of $|\Zstar|-P(A)$. If $|A|=t-1$, $X_A$ is internally independent, contributing $0$, for a total of $|\Zstar|-w(A)-P(A)+1$.
\end{proof}

\begin{lemma}\label{lem:Pmax}
Let $k^\ast:=\operatorname{argmin}_i q_i$. Then $P(A)\le P(\{k^\ast\})$ for every nonempty proper $A\subsetneq[t]$, with equality iff $A=\{k^\ast\}$ (or a tied minimiser).
\end{lemma}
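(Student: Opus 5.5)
The plan is to compare products over complements directly. For nonempty proper $A\subsetneq[t]$ we have $P(A)=\prod_{i\notin A}q_i$, where the complement $A^c=[t]\setminus A$ is a nonempty proper subset of $[t]$. Maximising $P(A)$ is therefore the same as choosing a nonempty proper subset $C=A^c$ that maximises $\prod_{i\in C}q_i$. Since every $q_i\ge2>1$, this product is strictly increasing under enlarging $C$. So the maximum is attained when $C$ is as large as possible, namely a co-atom $[t]\setminus\{k\}$, which corresponds to $A$ being a singleton $\{k\}$.

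The argument proceeds in two steps.

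\textbf{Step 1.} Fix $A$ with $|A|\ge2$. Choose any $k\in A$ and set $A'=\{k\}$. Then $A^c\subsetneq A'^c$, since $A'^c$ contains the nonempty set $A\setminus\{k\}$ in addition to $A^c$. It follows that
\[
P(\{k\})=P(A)\prod_{i\in A\setminus\{k\}}q_i\ge 2P(A)>P(A).
\]
Hence no $A$ with $|A|\ge2$ attains the maximum.

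\textbf{Step 2.} Compare the singletons among themselves. Writing $N:=\prod_{i\in[t]}q_i$, we have $P(\{k\})=N/q_k$. This is maximised exactly when $q_k$ is minimal, so $P(\{k\})\le P(\{k^\ast\})$, with equality iff $q_k=q_{k^\ast}$, i.e.\ iff $k$ is a tied minimiser.

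Combining the two steps gives $P(A)\le P(\{k\})\le P(\{k^\ast\})$ for every $A$, with $k\in A$ chosen as in Step 1. Equality in the chain forces $|A|=1$ by the strict inequality of Step 1, and then forces $A=\{k\}$ with $q_k$ minimal by Step 2.

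The only delicate point is phrasing the equality case correctly. Strictness in Step 1 requires $q_i\ge2$, which always holds for fields. The parenthetical ``or a tied minimiser'' should be read as: equality holds iff $A=\{k\}$ for some $k$ with $q_k=\min_i q_i$. I would state it in exactly that form in the write-up.
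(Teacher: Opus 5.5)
Your proof is correct and is essentially the paper's argument. The paper writes $P(A)=\bigl(\prod_i q_i\bigr)/\prod_{i\in A}q_i$ and uses the single chain $\prod_{i\in A}q_i\ge q_{k^\ast}^{|A|}\ge q_{k^\ast}$, which combines your Step~1 (reduction to singletons via $q_i\ge 2$) and Step~2 (comparison of singletons) into one line with the same equality analysis; your reading of the parenthetical as ``$A=\{k\}$ for some $k$ with $q_k=\min_i q_i$'' is the right one.
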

\begin{proof}
$P(A)=\bigl(\prod_i q_i\bigr)/\prod_{i\in A}q_i$, so maximising $P(A)$ is equivalent to minimising $\prod_{i\in A}q_i$ over nonempty $A$. Since $q_i\ge q_{k^\ast}$ for all $i$, $\prod_{i\in A}q_i\ge q_{k^\ast}^{\,|A|}\ge q_{k^\ast}$, with equality throughout iff $|A|=1$ and the chosen index attains the minimum.
\end{proof}

\begin{theorem}[Sharp minimum degree]\label{thm:mindeg}
For $t\ge3$ and any field orders $q_1,\dots,q_t$, the minimum degree of $\DR$ is attained exactly at the singleton class of smallest field order and equals
\[
\delta(\DR) \;=\; |\Zstar| - \!\!\prod_{i\ne k^\ast} q_i \;=\; (q_{k^\ast}-1)\Biggl(\prod_{i\ne k^\ast}q_i - \prod_{i\ne k^\ast}(q_i-1)\Biggr) - 1 ,
\qquad k^\ast=\operatorname*{argmin}_i q_i .
\]
\end{theorem}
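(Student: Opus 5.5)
The plan is to compare the two cases of the degree formula (Proposition~\ref{prop:degree}) separately and show that the minimum is attained on the non-co-atom side, at $\{k^\ast\}$.

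\textbf{Non-co-atom classes.} For a class $A$ with $1\le|A|\le t-2$ we have $\deg(v)=|\Zstar|-P(A)$. Minimising this is the same as maximising $P(A)$. By Lemma~\ref{lem:Pmax}, $P(A)$ is maximised exactly at $A=\{k^\ast\}$ (or at a tied minimiser). Since $t\ge3$, a singleton has size $1\le t-2$, so it falls under this case. The minimum over non-co-atom classes is therefore $|\Zstar|-\prod_{i\ne k^\ast}q_i$.

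\textbf{Co-atom classes.} For a co-atom $A_j=[t]\setminus\{j\}$ we have $P(A_j)=q_j$ and $w(A_j)=\prod_{i\ne j}(q_i-1)$. So it suffices to prove the strict inequality
\[
q_j-1+\prod_{i\ne j}(q_i-1)\;<\;\prod_{i\ne k^\ast}q_i .
\]
Strictness is what makes the minimum attained only at singletons. I split into two cases on whether $j=k^\ast$.

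\emph{Case $j\ne k^\ast$.} Set $M:=\prod_{i\notin\{j,k^\ast\}}q_i$. This product is nonempty because $t\ge3$. The right-hand side equals $q_jM$. On the left:
\begin{itemize}
\item $q_{k^\ast}-1\le q_j-1$, by the choice of $k^\ast$;
\item $\prod_{i\notin\{j,k^\ast\}}(q_i-1)\le M-1$, since a nonempty product of factors $q_i-1$ with $q_i\ge2$ is at most $\prod q_i-1$.
\end{itemize}
Hence the left-hand side is at most $(q_j-1)+(q_j-1)(M-1)=(q_j-1)M<q_jM$.

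\emph{Case $j=k^\ast$.} Set $N:=\prod_{i\ne k^\ast}q_i$ and $N':=\prod_{i\ne k^\ast}(q_i-1)$. We need $N-N'>q_{k^\ast}-1$. Expanding $N=\prod\bigl((q_i-1)+1\bigr)$ over the $t-1\ge2$ factors shows that $N-N'$ contains, among other nonnegative terms, the sum $\sum_{m\ne k^\ast}\prod_{i\ne k^\ast,m}(q_i-1)$. It also contains the term $1$ (the empty product). Already with two factors $a,b\ge q_{k^\ast}$ one gets $ab-(a-1)(b-1)=a+b-1\ge 2q_{k^\ast}-1>q_{k^\ast}-1$. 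The general case follows the same way, because the extra factors only increase the difference.

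\textbf{Closed form.} Counting units coordinatewise gives $|\Zstar|=\prod_i q_i-\prod_i(q_i-1)-1$. Substituting this and factoring out $q_{k^\ast}-1$ yields the second expression in the statement; this is routine algebra.

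The main obstacle is the co-atom comparison, since a co-atom with a huge $q_j$ could a priori have small degree. The case split on $j=k^\ast$ versus $j\ne k^\ast$, with the factor $q_j$ pulled out of $\prod_{i\ne k^\ast}q_i$ in the first case, is designed to handle exactly that.
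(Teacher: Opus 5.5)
Your proof is correct and follows the paper's route: you minimise over clique classes via Lemma~\ref{lem:Pmax}, then rule out co-atoms by the inequality $(\ast)$, split into $j=k^\ast$ and $j\ne k^\ast$ with the same auxiliary product $M$. Only the elementary estimates inside each case differ (your bound $\prod(q_i-1)\le M-1$ and the $(q_i-1)+1$ expansion, versus the paper's $(q_{k^\ast}-1)M$ bound and telescoping identity), and your explicit insistence on the strict inequality is if anything more careful than the paper. The paper states $(\ast)$ non-strictly, although ``attained exactly'' needs strictness, which its case computations do in fact deliver.
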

\begin{proof}
By Proposition~\ref{prop:degree}, $\deg(v)=|\Zstar|-P(A)$ is minimised over clique classes exactly where $P(A)$ is maximised, which by Lemma~\ref{lem:Pmax} is uniquely at $A=\{k^\ast\}$. It remains to check no co-atom class beats it, i.e.\ that for every $j\in[t]$,
\[
P(\{k^\ast\}) - P([t]\setminus\{j\}) \;\ge\; w([t]\setminus\{j\}) - 1,
\qquad\text{i.e.}\qquad
\prod_{i\ne k^\ast}q_i \;\ge\; q_j + \!\!\prod_{i\ne j}(q_i-1) - 1. \tag{$\ast$}
\]

\emph{Case $j=k^\ast$.} Write $m:=t-1\ge2$ and let $x_1,\dots,x_m$ be the values $q_i$ for $i\ne k^\ast$. The telescoping identity
\[
\prod_{l=1}^m x_l - \prod_{l=1}^m (x_l-1) \;=\; \sum_{l=1}^m \Bigl(\textstyle\prod_{p<l}x_p\Bigr)\Bigl(\textstyle\prod_{p>l}(x_p-1)\Bigr)
\]
has every term $\ge0$; the term $l=m$ alone equals $\prod_{p<m}x_p\ge q_{k^\ast}$ (a nonempty product, $m\ge2$, of values $\ge q_{k^\ast}$), which already exceeds $q_{k^\ast}-1$. This is exactly $(\ast)$ for $j=k^\ast$.

\emph{Case $j\ne k^\ast$.} Let $M:=\prod_{i\notin\{k^\ast,j\}} q_i$ (a product of $t-2\ge1$ terms, each $\ge q_{k^\ast}$, so $M\ge q_{k^\ast}^{\,t-2}\ge q_{k^\ast}>q_{k^\ast}-1$). Then $\prod_{i\ne k^\ast}q_i = q_jM$ and $\prod_{i\ne j}(q_i-1) = (q_{k^\ast}-1)\prod_{i\notin\{k^\ast,j\}}(q_i-1) \le (q_{k^\ast}-1)M$. So it suffices to show $q_jM - q_j \ge (q_{k^\ast}-1)M - 1$. Since $q_j\ge q_{k^\ast}$,
\[
q_j(M-1) \;\ge\; q_{k^\ast}(M-1) \;=\; q_{k^\ast}M - q_{k^\ast} \;\ge\; (q_{k^\ast}-1)M - 1,
\]
the last step being equivalent to $M\ge q_{k^\ast}-1$, established above. This gives $(\ast)$.

Combining both cases with Proposition~\ref{prop:degree} gives the closed form.
\end{proof}

\subsection{Domination Number}\label{sec:domination}

\begin{theorem}\label{thm:domination}
For $t\ge3$ and every choice of field orders, $\gamma(\DR)=2$.
\end{theorem}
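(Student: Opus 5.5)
We prove the two inequalities $\gamma(\DR)\ge2$ and $\gamma(\DR)\le2$ separately; both reduce to statements about the quotient graph $Q_t$ via Corollary~\ref{cor:simplified} and Proposition~\ref{rem:blowup}.

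\textbf{Lower bound.} It suffices to show that no single vertex dominates, i.e.\ that $\DR$ has no vertex of degree $|V(\DR)|-1$. By Theorem~\ref{thm:C} no vertex of $\DR$ is isolated when $t\ge3$, so $V(\DR)=\Zstar$. Take $v\in X_A$. Since $A$ is proper, we may fix $i\notin A$. Then $\{i\}\cap A=\emptyset$, so no vertex of $X_{\{i\}}$ is adjacent to $v$, and $X_{\{i\}}\ne\emptyset$. This nonadjacent vertex differs from $v$, since $A\ne\{i\}$. Alternatively, Proposition~\ref{prop:degree} gives $\deg(v)\le|\Zstar|-P(A)$ with $P(A)\ge2$, which yields the same conclusion. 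Hence $\gamma\ge2$.

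\textbf{Upper bound.} We look for two vertices whose closed neighbourhoods cover every class. The natural candidates are representatives $u\in X_{A_i}$ and $v\in X_{A_j}$ of two distinct co-atoms $A_i=[t]\setminus\{i\}$ and $A_j=[t]\setminus\{j\}$, with $i\ne j$.

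First consider classes $B\ne A_i,A_j$. By Step~2 of Theorem~\ref{thm:C}, if $|B|\ge2$ then $B$ is joined to every co-atom, so it is dominated. If $B=\{k\}$, then $B$ is joined to every co-atom except $A_k$. Since $k$ cannot equal both $i$ and $j$, the singleton $B$ is joined to at least one of $A_i,A_j$. Every such class is therefore dominated entirely, because adjacency depends only on classes.

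The co-atom classes themselves require care, since a co-atom class is internally independent. We dominate each one from the other chosen vertex. The classes $A_i$ and $A_j$ are joined by Step~1 of Theorem~\ref{thm:C}, since $A_i\cap A_j\ne\emptyset$ for $t\ge3$. So every vertex of $X_{A_i}$, including $u$ and any other member of that class, is adjacent to $v$, and symmetrically every vertex of $X_{A_j}$ is adjacent to $u$.

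Hence $\{u,v\}$ is a dominating set, and $\gamma(\DR)=2$. The main subtlety is exactly this internal independence of the co-atom classes: a single co-atom vertex cannot dominate its own class, and the choice of two distinct co-atoms, which are mutually joined, resolves it.
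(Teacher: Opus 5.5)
Your proof is correct and follows the paper's argument. The upper bound takes vertices from two distinct co-atom classes with the same case split, and your lower bound (a non-neighbour in $X_{\{i\}}$ for some $i\notin A$) is the elementary content of the paper's bound $\deg(v)\le|\Zstar|-P(A)$ with $P(A)\ge2$; you also note explicitly that $V(\DR)=\Zstar$ by Theorem~\ref{thm:C}, a point the paper leaves implicit.
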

\begin{proof}
No single vertex dominates $\DR$: by Proposition~\ref{prop:degree} and Lemma~\ref{lem:Pmax}, $\deg(v)\le|\Zstar|-P(A)\le |\Zstar|-2$ for every vertex $v$ (as $P(A)\ge2$, being a nonempty product of integers $\ge2$), so $|N[v]|\le|\Zstar|-1<|\Zstar|$; hence $\gamma(\DR)\ge2$.

For the upper bound, fix two distinct indices $i\ne j\in[t]$ and pick $u\in X_{A_i}$, $w\in X_{A_j}$, where $A_i=[t]\setminus\{i\}$. We claim $\{u,w\}$ dominates $\DR$. Let $B$ be any nonempty proper subset of $[t]$.
\begin{itemize}
\item If $B=A_i$: since $t\ge3$, distinct co-atoms intersect (Theorem~\ref{thm:C}, Step~1), so $A_i\cap A_j\ne\emptyset$ and every vertex of $X_{A_i}$ is adjacent to $w$.
\item If $B=A_j$: symmetrically, every vertex of $X_{A_j}$ is adjacent to $u$.
\item If $|B|\ge2$ and $B\notin\{A_i,A_j\}$: by Theorem~\ref{thm:C}, Step~2, $B$ is adjacent to every co-atom, hence to both $A_i$ and $A_j$.
\item If $B=\{k\}$ is a singleton: $\{k\}\cap A_i=\emptyset$ iff $k=i$, and $\{k\}\cap A_j=\emptyset$ iff $k=j$; since $i\ne j$, at least one of $A_i,A_j$ intersects $\{k\}$, so $X_{\{k\}}$ is adjacent to $u$ or to $w$.
\end{itemize}
In every case each vertex of $\Zstar$ lies in $N[u]\cup N[w]$, so $\{u,w\}$ is a dominating set and $\gamma(\DR)\le2$.
\end{proof}

\subsection{Independence Number and Vertex Cover Number}\label{sec:independence}

We now compute $\alpha(\DR)$ exactly. This is genuinely dual to, and structurally different from, the clique-number computation of Sections~\ref{sec:basic} and~\ref{sec:general-clique}: there, co-atom classes contribute at most one vertex to a clique and non-co-atom classes contribute their full size; here the roles are exactly reversed.

\begin{theorem}\label{thm:independence}
For $t\ge3$ and any field orders $q_1,\dots,q_t$, with $k^\ast:=\operatorname*{argmin}_i q_i$,
\[
\alpha(\DR) \;=\; \max\Bigl(t,\; 1+\!\!\prod_{i\ne k^\ast}(q_i-1)\Bigr).
\]
\end{theorem}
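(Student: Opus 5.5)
The plan is to classify independent sets of $\DR$ by the family of support classes they meet, using Proposition~\ref{rem:blowup} and Corollary~\ref{cor:simplified}. Let $I$ be an independent set and let $\mathcal F=\{A : I\cap X_A\ne\emptyset\}$. Distinct classes $A\ne B$ with $A\cap B\ne\emptyset$ are completely joined, so $\mathcal F$ must be a family of \emph{pairwise disjoint} nonempty proper subsets of $[t]$. The classes with $|A|\le t-2$ are internal cliques, so $I$ contains at most one vertex from each of them. A co-atom class $X_{A_j}$ is internally independent, so $I$ may contain all $w(A_j)=\prod_{i\ne j}(q_i-1)$ of its vertices. This is exactly the reversal of roles noted before the statement.

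\textbf{Case 1: $\mathcal F$ contains a co-atom $A_j=[t]\setminus\{j\}$.} The only nonempty proper subset disjoint from $A_j$ is $\{j\}$, so $\mathcal F\subseteq\{A_j,\{j\}\}$. Since $t\ge3$, the singleton $\{j\}$ satisfies $|\{j\}|=1\le t-2$ and is therefore a clique class, contributing at most one vertex. Hence $|I|\le 1+\prod_{i\ne j}(q_i-1)$. This bound is attained by taking all of $X_{A_j}$ together with one vertex of $X_{\{j\}}$, which is independent because $A_j\cap\{j\}=\emptyset$. The quantity $\prod_{i\ne j}(q_i-1)$ is largest when the omitted factor $q_j-1$ is smallest, that is, at $j=k^\ast$. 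This gives the term $1+\prod_{i\ne k^\ast}(q_i-1)$.

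\textbf{Case 2: $\mathcal F$ contains no co-atom.} Every class in $\mathcal F$ then contributes at most one vertex. A family of pairwise disjoint nonempty subsets of $[t]$ has at most $t$ members, so $|I|\le t$. Equality is attained by picking one vertex from each singleton class $X_{\{1\}},\dots,X_{\{t\}}$, which are pairwise disjoint and hence pairwise non-adjacent.

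Combining the two cases gives the stated maximum. As a consequence, $\beta(\DR)=|\Zstar|-\alpha(\DR)$ by the standard identity. I do not expect a serious obstacle. The only points needing care are the observation that a co-atom is disjoint from exactly one other class, and the use of $t\ge3$ to guarantee that singletons are clique classes (so they contribute only one vertex), rather than co-atoms as they would be when $t=2$.
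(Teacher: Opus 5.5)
Your proposal is correct and follows essentially the same route as the paper. Both arguments decompose an independent set by the pairwise disjoint family $\mathcal F$ of support classes it meets, allow at most one vertex from a clique class and up to $w(A)$ vertices from a co-atom class, and then split according to whether a co-atom $A_j$ is present (forcing $\mathcal F\subseteq\{A_j,\{j\}\}$) or absent (forcing $|\mathcal F|\le t$). Your explicit remark that $t\ge3$ makes $\{j\}$ a clique class is a step the paper uses only implicitly.
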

\begin{proof}
By Corollary~\ref{cor:simplified}, distinct classes $X_A,X_B$ contain a non-adjacent pair of vertices only if $A\cap B=\emptyset$; so an independent set of $\DR$ decomposes into contributions from a family $\mathcal F$ of pairwise \emph{disjoint} nonempty proper subsets of $[t]$, with, from each $A\in\mathcal F$, a subset of $X_A$ that is independent \emph{within} $X_A$. Since $X_A$ is an internal clique when $|A|\le t-2$, at most one vertex of $X_A$ may be used; since $X_A$ is internally independent when $|A|=t-1$, all $w(A)$ vertices may be used. Moreover, for $t\ge3$ any two distinct co-atoms intersect (Theorem~\ref{thm:C}, Step~1), so $\mathcal F$ contains \emph{at most one} co-atom.

\emph{No co-atom in $\mathcal F$.} Each class contributes at most $1$, so the total is at most $|\mathcal F|$. Since the members of $\mathcal F$ are pairwise disjoint nonempty subsets of the $t$-element set $[t]$, $|\mathcal F|\le t$, with equality exactly when $\mathcal F$ is the partition into all $t$ singletons — which is achievable (each singleton class is nonempty and pairwise disjoint from the others). This case contributes exactly $t$.

\emph{One co-atom $A=[t]\setminus\{j\}$ in $\mathcal F$.} The only nonempty subset of $[t]$ disjoint from $A$ is $\{j\}$ itself, so $\mathcal F\subseteq\{A,\{j\}\}$, contributing $w(A)+1=\prod_{i\ne j}(q_i-1)+1$ at best. Since $q_i\mapsto q_i-1$ is strictly increasing, $\prod_{i\ne j}(q_i-1)$ is maximised over $j$ exactly when the \emph{excluded} factor $q_j-1$ is smallest, i.e.\ $j=k^\ast$, giving $1+\prod_{i\ne k^\ast}(q_i-1)$.

Taking the maximum of the two cases gives $\alpha(\DR)$, and both values are attained by explicit independent sets, so the bound is sharp.
\end{proof}

\begin{corollary}[Vertex cover number]\label{cor:vertexcover}
For $t\ge3$,
\[
\beta(\DR) \;=\; |\Zstar| \;-\; \max\Bigl(t,\; 1+\!\!\prod_{i\ne k^\ast}(q_i-1)\Bigr),
\]
by the standard identity $\alpha(G)+\beta(G)=|V(G)|$.
\end{corollary}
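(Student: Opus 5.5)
The plan is to read Corollary~\ref{cor:vertexcover} off Theorem~\ref{thm:independence} using the complementation identity $\alpha(G)+\beta(G)=|V(G)|$ recalled in the preliminaries. That identity holds because $S$ is independent exactly when $V(G)\setminus S$ is a vertex cover, so a maximum independent set and a minimum vertex cover are complements of one another.

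The one point to check is the vertex set. The definition of $\DR$ deletes isolated vertices of $\WGR-\GR$. By Theorem~\ref{thm:C}, for $t\ge3$ no vertex of $\DR$ is isolated, so nothing is deleted and $V(\DR)=\Zstar$ in full. Hence $|V(\DR)|=|\Zstar|$.

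Applying the identity with $G=\DR$ and substituting the closed form from Theorem~\ref{thm:independence} gives
\[
\beta(\DR)=|\Zstar|-\alpha(\DR)=|\Zstar|-\max\Bigl(t,\;1+\prod_{i\ne k^\ast}(q_i-1)\Bigr).
\]
If an explicit value is wanted, one can also record $|\Zstar|=\prod_i q_i-\prod_i(q_i-1)-1$. This counts all elements of $R$, removes the units (full support), and removes $0$.

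There is no real obstacle here. The only subtle step is confirming, via Theorem~\ref{thm:C}, that removing isolated vertices leaves the vertex set unchanged. Without that, the count $|V(\DR)|$ could in principle be smaller than $|\Zstar|$.
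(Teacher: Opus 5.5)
Your proposal is correct and follows the paper's own argument: both obtain $\beta(\DR)$ by subtracting the value of $\alpha(\DR)$ from Theorem~\ref{thm:independence} using the identity $\alpha(G)+\beta(G)=|V(G)|$. You also check, via Theorem~\ref{thm:C}, that no isolated vertices are deleted when $t\ge3$, so that $|V(\DR)|=|\Zstar|$; the paper uses this step without stating it, and your version makes it explicit.
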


\section{Shifted Extremal Families Do Not Suffice}\label{sec:shifting}

Section~\ref{sec:general-clique} leaves the general-$t$ clique number open, with two constructions that are each optimal only in certain regimes and no explanation of when either wins. The classical \emph{compression}, or \emph{shifting}, technique from extremal set theory \cite{FranklTokushige2016} supplies a natural first step toward such an explanation: it shows that an extremal family can always be brought into a specific normal form. We record this reduction here, together with the fact, following directly from Section~\ref{sec:general-clique}'s own counterexample, that the normal form alone does not resolve the problem.

\begin{definition}
For $i\ne j\in[t]$, the \emph{$(i,j)$-shift} of a family $\mathcal F$ of nonempty proper subsets of $[t]$ is $\operatorname{sh}_{ij}(\mathcal F) = \{S_{ij}(A) : A\in\mathcal F\}$, where
\[
S_{ij}(A) = \begin{cases} (A\setminus\{j\})\cup\{i\} & \text{if } j\in A,\ i\notin A,\text{ and } (A\setminus\{j\})\cup\{i\}\notin\mathcal F,\\ A & \text{otherwise.}\end{cases}
\]
\end{definition}

\begin{lemma}[Classical, {\cite[Ch.~1]{FranklTokushige2016}}]\label{lem:shift-classical}
The map $A\mapsto S_{ij}(A)$ is injective on $\mathcal F$, so $|\operatorname{sh}_{ij}(\mathcal F)|=|\mathcal F|$. If $\mathcal F$ is intersecting, so is $\operatorname{sh}_{ij}(\mathcal F)$.
\end{lemma}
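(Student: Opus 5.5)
We prove the two claims of Lemma~\ref{lem:shift-classical} separately, working directly from the definition of $S_{ij}$. Throughout, write $A'=(A\setminus\{j\})\cup\{i\}$ when $j\in A$, $i\notin A$. Call $A\in\mathcal F$ \emph{moved} if $S_{ij}(A)=A'\ne A$, and \emph{fixed} otherwise.

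\emph{Injectivity.} Suppose $S_{ij}(A)=S_{ij}(B)$ with $A\ne B$ in $\mathcal F$. If both are fixed, they are equal, a contradiction. If both are moved, then $A'=B'$. Both $A$ and $B$ contain $j$ and omit $i$, so $A=(A'\setminus\{i\})\cup\{j\}=B$, again a contradiction. In the mixed case, $A$ is moved and $B$ is fixed with $A'=B$, so $A'\in\mathcal F$. This contradicts the defining condition for $A$ to move. Hence $S_{ij}$ is injective and $|\operatorname{sh}_{ij}(\mathcal F)|=|\mathcal F|$.

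We should also check that $S_{ij}(A)$ is still a nonempty proper subset. This holds because $|A'|=|A|$ and $A'\ne\emptyset$.

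\emph{Intersecting.} Take $A,B\in\mathcal F$ and show $S_{ij}(A)\cap S_{ij}(B)\ne\emptyset$.
\begin{itemize}
\item If both are fixed, this is just $A\cap B\ne\emptyset$.
\item If both are moved, both images contain $i$.
\end{itemize}
The remaining case, where $A$ is moved and $B$ is fixed, is the main obstacle. We need $A'\cap B\ne\emptyset$, and we already know $A\cap B\ne\emptyset$. Since $A'$ agrees with $A$ except that $j$ is replaced by $i$, the only danger is $A\cap B=\{j\}$. Assume that, so $j\in B$.
\begin{itemize}
\item If $i\in B$, then $i\in A'\cap B$ and we are done.
\item If $i\notin B$, then $B$ contains $j$ and omits $i$, and $B$ is fixed. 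The only way this happens is $B'\in\mathcal F$. By the intersecting hypothesis, $A\cap B'\ne\emptyset$. But $B'=(B\setminus\{j\})\cup\{i\}$, while $i\notin A$ and $A\cap B=\{j\}$. So $A\cap B'=(A\cap B)\setminus\{j\}=\emptyset$, a contradiction.
\end{itemize}
Hence $\operatorname{sh}_{ij}(\mathcal F)$ is intersecting. The only delicate point is this last subcase, where the non-movement of $B$ must be converted, via $B'\in\mathcal F$, into the contradiction above.
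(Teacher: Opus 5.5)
Your proof is correct and complete: the injectivity cases (the mixed case contradicting $A'\notin\mathcal F$) and the intersecting argument (reducing to $A\cap B=\{j\}$ with $i\notin B$, then using $B'\in\mathcal F$ to force $A\cap B'=\emptyset$) are exactly right. The paper gives no proof of its own and simply cites Frankl--Tokushige; your case analysis is the standard shifting argument found there.
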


\begin{proposition}\label{prop:shift}
Suppose $q_i\ge q_j$. If $\mathcal F$ is an intersecting family of nonempty proper subsets of $[t]$, then the total clique weight of $\operatorname{sh}_{ij}(\mathcal F)$ is at least that of $\mathcal F$:
\[
\sum_{A'\in\operatorname{sh}_{ij}(\mathcal F)} f(A') \;\ge\; \sum_{A\in\mathcal F} f(A),
\]
where $f(A)=w(A)=\prod_{\ell\in A}(q_\ell-1)$ if $|A|\le t-2$, and $f(A)=1$ if $|A|=t-1$.
\end{proposition}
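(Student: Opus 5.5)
The plan is to compare the two weight sums term by term through the bijection $A\mapsto S_{ij}(A)$ supplied by Lemma~\ref{lem:shift-classical}. Since that map is injective on $\mathcal F$, we can reindex and write
\[
\sum_{A'\in\operatorname{sh}_{ij}(\mathcal F)} f(A') \;=\; \sum_{A\in\mathcal F} f\bigl(S_{ij}(A)\bigr).
\]
It then suffices to prove the pointwise inequality $f(S_{ij}(A))\ge f(A)$ for every $A\in\mathcal F$.

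Before that, I would check that $\operatorname{sh}_{ij}(\mathcal F)$ is again an admissible family. Each $S_{ij}(A)$ is either $A$ itself or $(A\setminus\{j\})\cup\{i\}$ with $j\in A$ and $i\notin A$. In both cases $|S_{ij}(A)|=|A|$, so $S_{ij}(A)$ is again a nonempty proper subset of $[t]$. By Lemma~\ref{lem:shift-classical}, the shifted family is still intersecting. So by Corollary~\ref{cor:simplified} it still indexes a clique of $\DR$, with the same weighting convention as in Proposition~\ref{prop:upperbound}. This justifies calling the left-hand side a clique weight, although the inequality itself is purely termwise.

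For the pointwise inequality, split into two cases.
\begin{itemize}
\item If $S_{ij}(A)=A$, equality holds trivially.
\item Otherwise $A'=(A\setminus\{j\})\cup\{i\}$ has the same size as $A$, so $A$ and $A'$ are either both co-atoms or both of size $\le t-2$.
\begin{itemize}
\item In the co-atom case, $f(A')=1=f(A)$.
\item In the other case,
\[
w(A')=\frac{q_i-1}{q_j-1}\,w(A),
\]
because exactly the factor $q_j-1$ is replaced by $q_i-1$. Here $q_j-1\ge1$ since $q_j\ge2$. The hypothesis $q_i\ge q_j$ gives $w(A')\ge w(A)$.
\end{itemize}
\end{itemize}
Summing over $A\in\mathcal F$ yields the claim.

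I do not expect a genuine obstacle. The one point needing care is the size-preservation step: it is what guarantees that a shift never moves a set across the co-atom threshold $|A|=t-1$, where $f$ switches from $w(A)$ to $1$. Without it, the termwise comparison could fail. I would state this explicitly, since it is exactly why the weight function $f$ interacts well with shifting.
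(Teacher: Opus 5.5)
Your proposal is correct and matches the paper's proof: you reindex via the bijection $A\mapsto S_{ij}(A)$ of Lemma~\ref{lem:shift-classical}, then verify $f(S_{ij}(A))\ge f(A)$ pointwise, using size preservation (co-atoms stay co-atoms) and the factor $(q_i-1)/(q_j-1)\ge1$ otherwise. Your extra admissibility check on the shifted family is harmless, though the inequality itself does not need it.
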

\begin{proof}
By Lemma~\ref{lem:shift-classical}, $A\mapsto S_{ij}(A)$ is a bijection from $\mathcal F$ to $\operatorname{sh}_{ij}(\mathcal F)$, so it suffices to show $f(S_{ij}(A))\ge f(A)$ for every $A\in\mathcal F$. If $S_{ij}(A)=A$ this is trivial. Otherwise $S_{ij}(A)=A'=(A\setminus\{j\})\cup\{i\}$ with $|A'|=|A|$. If $|A|\le t-2$, then $f(A')=f(A)\cdot\frac{q_i-1}{q_j-1}\ge f(A)$ since $q_i\ge q_j$. If $|A|=t-1$, then $A$ is a co-atom, and $f(A)=f(A')=1$ regardless, since shifting preserves size and hence preserves co-atom status. In every case $f(S_{ij}(A))\ge f(A)$.
\end{proof}

\begin{corollary}\label{cor:shifted-optimal}
There is a maximum-weight intersecting family which is \emph{shifted}, i.e.\ stable under $\operatorname{sh}_{ij}$ for every $i<j$ with $q_i\ge q_j$, where the indices are ordered so that $q_1\ge q_2\ge\cdots\ge q_t$.
\end{corollary}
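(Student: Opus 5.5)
Order the indices so that $q_1\ge q_2\ge\cdots\ge q_t$. The plan is to start from an arbitrary maximum-weight intersecting family and apply admissible shifts $\operatorname{sh}_{ij}$ (with $i<j$, so $q_i\ge q_j$) until none changes the family.

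\textbf{Step 1 (shifts preserve optimality).} Let $\mathcal F_0$ be an intersecting family of nonempty proper subsets of $[t]$ whose weight $\sum_{A\in\mathcal F_0}f(A)$ is maximal. Such a family exists since there are finitely many families, and by Corollary~\ref{cor:simplified} this weight is $\omega(\DR)$. If some admissible shift changes $\mathcal F_0$, replace $\mathcal F_0$ by $\operatorname{sh}_{ij}(\mathcal F_0)$. Three facts keep the new family admissible:
\begin{itemize}
\item By Lemma~\ref{lem:shift-classical} it is still intersecting, with the same cardinality.
\item Each $S_{ij}(A)$ has the same size as $A$, so it is still a nonempty proper subset of $[t]$.
\item By Proposition~\ref{prop:shift} its weight does not drop. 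By maximality the weight is therefore unchanged, and the new family is again maximum-weight.
\end{itemize}

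\textbf{Step 2 (termination), the main obstacle.} We must show the process cannot cycle. Use the potential $\Phi(\mathcal F)=\sum_{A\in\mathcal F}\sum_{a\in A}a$. A shift that actually changes the family replaces at least one $A$ by $(A\setminus\{j\})\cup\{i\}$ with $i<j$. This lowers that member's index sum by $j-i>0$ and leaves every other member unchanged. Since the shifted sets are distinct (injectivity in Lemma~\ref{lem:shift-classical}), $\Phi$ strictly decreases. $\Phi$ is a nonnegative integer, so after finitely many steps no admissible shift changes the family. That family is shifted, intersecting and of maximum weight.

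\textbf{Step 3 (ties).} When $q_i=q_j$ for some $i<j$, both $\operatorname{sh}_{ij}$ and $\operatorname{sh}_{ji}$ would be weight-nondecreasing, and applying both could oscillate. Fixing the orientation $i<j$ resolves this, and the corollary only asks for stability under $\operatorname{sh}_{ij}$ with $i<j$. So Step~2's potential argument applies without change.
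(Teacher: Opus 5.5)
Your proof is correct. Its skeleton matches the paper's: start from a maximum-weight intersecting family and apply shifts, using Lemma~\ref{lem:shift-classical} and Proposition~\ref{prop:shift} to stay intersecting and maximum-weight. Where you differ is termination, and there your argument is the sound one.

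The paper concludes that the process stabilizes because the total weight is non-decreasing and bounded above. But you start at a maximum, so the weight is constant from the first step and measures no progress at all. Moreover, if shifts are allowed in both directions under a tie $q_i=q_j$ (as the paper's phrase ``shifts with $q_i\ge q_j$'' literally permits), the process can genuinely cycle. For $q=(5,5,2)$, the two maximum families $\{\{1\},\{1,2\},\{1,3\}\}$ and $\{\{2\},\{1,2\},\{2,3\}\}$ are exchanged by $\operatorname{sh}_{21}$ and $\operatorname{sh}_{12}$. Your index-sum potential $\Phi$, together with fixing the orientation $i<j$, supplies the missing progress measure and makes the corollary rigorous.

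One small wording fix in Step~2: a single shift may move several members at once, so the other members are not necessarily unchanged. Instead, use the bijection of Lemma~\ref{lem:shift-classical} to write $\Phi(\operatorname{sh}_{ij}(\mathcal F))=\Phi(\mathcal F)-(j-i)\,\bigl|\{A\in\mathcal F:S_{ij}(A)\ne A\}\bigr|$. This is strictly smaller whenever the family changes.
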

\begin{proof}
Starting from any maximum-weight intersecting family and repeatedly applying shifts with $q_i\ge q_j$, the total weight is non-decreasing by Proposition~\ref{prop:shift}, and it is bounded above since there are only finitely many subsets of $[t]$. So the process stabilizes at a shifted family of weight at least the original maximum, hence itself of maximum weight.
\end{proof}

Corollary~\ref{cor:shifted-optimal} narrows the search for an extremal family to shifted families, but does not by itself resolve the extremal problem. The $t=4$ counterexample of Example~\ref{ex:startight} is itself already shifted with respect to $q_1\ge q_2\ge q_3\ge q_4$: none of its middle-sized sets can be improved by any shift, since none of them involve the smallest-indexed coordinate in a way a shift could move. So shifted-ness alone does not force a star, and the general extremal problem of Question~\ref{q:general-clique} remains open even within the shifted normal form.

%\section{A Reconstruction Theorem}\label{sec:reconstruction}

Every result so far describes $\DR$ as a function of the ring data $(q_1,\dots,q_t)$. We now ask the converse question: does the abstract graph $\DR$, with no reference to $R$, determine that data? We show it does, so that $\DR$ is a complete isomorphism invariant of $R$ among finite reduced rings with $t\ge3$ factors.

\begin{definition}
Vertices $u,v$ of a graph $G$ are \emph{twins} if $N(u)=N(v)$ or $N[u]=N[v]$. The \emph{twin classes} of $G$ are the equivalence classes of this relation.
\end{definition}

\begin{lemma}\label{lem:twins}
For $t\ge3$, the twin classes of $\DR$ are exactly the support classes $X_A$, for $A$ ranging over the nonempty proper subsets of $[t]$.
\end{lemma}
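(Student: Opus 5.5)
The plan is to prove the two inclusions separately: each support class $X_A$ lies inside a single twin class, and vertices from distinct support classes are never twins. Throughout I use the blow-up description of Proposition~\ref{rem:blowup} and the class-level adjacency rule of Corollary~\ref{cor:simplified}. I also use that every $X_C$ is nonempty, since $w(C)\ge1$.

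\emph{Step 1: each $X_A$ lies in one twin class.} Let $u,v\in X_A$ be distinct. Corollary~\ref{cor:simplified} shows that $u$ and $v$ have the same neighbours in every other class $X_C$: all of $X_C$ if $A\cap C\ne\emptyset$, and nothing otherwise. Inside $X_A$ there are two cases.
\begin{itemize}
\item If $|A|\le t-2$, then $X_A$ is a clique, so $N[u]=N[v]$.
\item If $|A|=t-1$, then $X_A$ is independent, so $N(u)=N(v)$.
\end{itemize}
Either way $u$ and $v$ are twins.

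\emph{Step 2: vertices in distinct classes are not twins.} Let $u\in X_A$ and $v\in X_B$ with $A\ne B$. It suffices to find a class $C\notin\{A,B\}$ that meets exactly one of $A,B$. Then any $w\in X_C$ satisfies $w\notin\{u,v\}$ and is adjacent to exactly one of $u,v$. Such a $w$ separates the open neighbourhoods and also the closed ones, so neither $N(u)=N(v)$ nor $N[u]=N[v]$ holds. By symmetry, choose $i\in A\setminus B$.
\begin{itemize}
\item \emph{Case $A\ne\{i\}$.} Take $C=\{i\}$. It meets $A$ but not $B$, and $C\ne B$ because $i\notin B$.
\item \emph{Case $A=\{i\}$ and $B\ne A_i:=[t]\setminus\{i\}$.} Take $C=A_i$. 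It is disjoint from $A$, and it meets $B$ because $B$ is nonempty and avoids $i$. Also $C\ne A,B$.
\item \emph{Case $A=\{i\}$ and $B=A_i$.} Take $C=\{j\}$ for some $j\ne i$. It meets $B$ but not $A$. Moreover $C\ne B$ because $|B|=t-1\ge2$ when $t\ge3$.
\end{itemize}

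The main obstacle I anticipate is this case analysis in Step 2, in particular the degenerate complementary pair $\{i\}$, $[t]\setminus\{i\}$. This is the only place where $t\ge3$ is genuinely needed. For $t=2$ the two classes $\{1\}$ and $\{2\}$ would be indistinguishable, which is consistent with Theorem~\ref{thm:B}.

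A secondary point is that twinhood mixes the open and closed neighbourhood conditions, so one could worry whether it is transitive. Steps 1 and 2 settle this. Step 2 shows that the twin relation never relates vertices of different classes. Step 1 shows that within one class it is the full relation. Hence the twin relation is exactly "same support", which is an equivalence relation whose classes are the $X_A$.
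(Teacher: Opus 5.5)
Your proof is correct and is essentially the paper's argument: the same clique/independent split inside $X_A$, and the same three cases for $i\in A\setminus B$. You separate by $\{i\}$, by $[t]\setminus\{i\}$, or, for the complementary pair $\{i\}$, $[t]\setminus\{i\}$, by a singleton $\{j\}$ with $j\ne i$. Your last case is slightly tidier: exhibiting a separating vertex $w\notin\{u,v\}$ rules out $N(u)=N(v)$ and $N[u]=N[v]$ at once, whereas the paper only argues $N(u)\ne N(v)$ there and leaves the closed-neighbourhood case (which fails because $u\not\sim v$) implicit.
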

\begin{proof}
Suppose $u,v\in X_A$ with $u\ne v$. If $|A|\le t-2$, $X_A$ is an internal clique, so $u$ and $v$ have the same closed neighborhood: $N[u]=X_A\cup(\text{external neighbors of }A)=N[v]$. If $|A|=t-1$, $X_A$ is internally independent, so $u$ and $v$ have the same open neighborhood: $N(u)=(\text{external neighbors of }A)=N(v)$. Either way, all of $X_A$ lies in one twin class.

Conversely, suppose $A\ne B$ are both nonempty and proper, with $u\in X_A$, $v\in X_B$; we show $u,v$ are not twins. Pick $x\in A\triangle B$, without loss of generality $x\in A\setminus B$.

If $\{x\}\ne A,B$, the singleton class $\{x\}$ satisfies $\{x\}\cap A=\{x\}\ne\emptyset$, so is joined to $A$ by Corollary~\ref{cor:simplified}, while $\{x\}\cap B=\emptyset$ since $x\notin B$, so $\{x\}$ is not joined to $B$. Any $w\in X_{\{x\}}$ is then adjacent to every vertex of $X_A$ but to no vertex of $X_B$, so $u$ and $v$ have different neighbor sets.

If instead $A=\{x\}$, consider the co-atom $C=[t]\setminus\{x\}$, which is disjoint from $A$ and hence not joined to it. If $B\ne C$, then $B$ contains some element other than $x$, so $B\cap C\ne\emptyset$ and $B$ is joined to $C$; taking any vertex of $X_C$ then distinguishes $u$ from $v$ directly. If $B=C$, then $u\in X_{\{x\}}$ and $v\in X_C$ are not adjacent to each other, since $\{x\}\cap C=\emptyset$; but $v$ is adjacent to every class intersecting $C$, which by Corollary~\ref{cor:simplified} is every class except $\{x\}$ itself, so $v$ has neighbors that $u$ does not share, and again $N(u)\ne N(v)$.

In every case $u,v$ are not twins, so distinct classes never merge into a single twin class.
\end{proof}

\begin{lemma}\label{lem:degree-size}
For a class $A$ with $1\le|A|\le t-1$, the number of other classes joined to $A$, that is, the degree of the twin class $X_A$ in the twin-class quotient graph, equals $2^t-2-2^{t-|A|}$, and this is strictly increasing in $|A|$.
\end{lemma}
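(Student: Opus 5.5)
The count follows from Corollary~\ref{cor:simplified}: a class $B\ne A$ is joined to $A$ exactly when $A\cap B\ne\emptyset$. So I will count the nonempty proper subsets $B\subsetneq[t]$ with $B\ne A$ and $B\cap A\ne\emptyset$, by complementary counting.

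There are $2^t-2$ nonempty proper subsets in total. The next step is to remove those disjoint from $A$. These are the nonempty subsets of $[t]\setminus A$, which has $t-|A|\ge1$ elements, so there are $2^{t-|A|}-1$ of them. Each is automatically proper, since $A\ne\emptyset$, and none equals $A$, since $A\cap A=A\ne\emptyset$. Finally I remove $A$ itself, which does intersect $A$ but is excluded by the condition $B\ne A$. This gives
\[
(2^t-2)-(2^{t-|A|}-1)-1 \;=\; 2^t-2-2^{t-|A|}.
\]

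To identify this number with the degree in the twin-class quotient graph, I use Lemma~\ref{lem:twins}: the twin classes are exactly the $X_A$. Adjacency between distinct classes is complete or empty by Corollary~\ref{cor:simplified}, so the quotient graph is precisely $Q_t$. The degree of $X_A$ there is therefore the count above.

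For monotonicity, $2^{t-|A|}$ is strictly decreasing in $|A|$, so the expression is strictly increasing in $|A|$. The only point requiring care is the bookkeeping of the excluded set $A$ itself; there is no genuine obstacle.
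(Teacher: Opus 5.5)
Your proposal is correct and follows the paper's proof essentially verbatim. Both count complementarily from the $2^t-2$ nonempty proper subsets, remove the $2^{t-|A|}-1$ classes disjoint from $A$ and the class $A$ itself, and get monotonicity from $2^{t-|A|}$ being strictly decreasing in $|A|$. Your extra step, identifying the twin-class quotient graph with $Q_t$ via Lemma~\ref{lem:twins} (valid under the standing assumption $t\ge3$), makes explicit what the paper leaves implicit in its phrase ``that is''.
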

\begin{proof}
By Corollary~\ref{cor:simplified}, a class $B\ne A$ is joined to $A$ if and only if $B\cap A\ne\emptyset$. The classes with $B\cap A=\emptyset$ are exactly the nonempty subsets of $[t]\setminus A$, of which there are $2^{t-|A|}-1$. Excluding $A$ itself from the total count of $2^t-2$ nonempty proper subsets, the number of classes joined to $A$ is
\[
(2^t-2) - 1 - (2^{t-|A|}-1) = 2^t-2-2^{t-|A|}.
\]
Since $2^{t-|A|}$ strictly decreases as $|A|$ increases, this degree strictly increases in $|A|$.
\end{proof}

\begin{theorem}\label{thm:reconstruction}
Let $R\cong\mathbb F_{q_1}\times\cdots\times\mathbb F_{q_t}$ and $S\cong\mathbb F_{q_1'}\times\cdots\times\mathbb F_{q_{t'}'}$ be finite reduced rings with $t,t'\ge3$. If $\DR\cong\mathcal D(S)$ as abstract graphs, then $t=t'$ and $\{q_1,\dots,q_t\}=\{q_1',\dots,q_t'\}$ as multisets; consequently $R\cong S$.
\end{theorem}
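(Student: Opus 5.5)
The plan is to reconstruct the ring data from the abstract graph $G=\DR$ in three stages: recover the support classes from the twin structure, read off $t$ from the number of classes, and then read off the field orders from the sizes of the singleton classes.

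\textbf{Stage 1: support classes.} Twin classes are an isomorphism invariant. An isomorphism $\DR\to\mathcal D(S)$ therefore induces a bijection between twin classes, and this bijection preserves class sizes. It also induces an isomorphism of the twin-class quotient graphs, so it preserves the quotient degree of each class. By Lemma~\ref{lem:twins}, the twin classes of $\DR$ are exactly the $2^t-2$ support classes $X_A$, and likewise for $\mathcal D(S)$.

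\textbf{Stage 2: recovering $t$.} Counting twin classes on both sides gives $2^t-2=2^{t'}-2$, hence $t=t'$.

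\textbf{Stage 3: locating the singletons.} By Lemma~\ref{lem:degree-size}, the quotient degree of $X_A$ is $2^t-2-2^{t-|A|}$, which is strictly increasing in $|A|$. The classes of minimum quotient degree are therefore exactly the singleton classes $X_{\{i\}}$, and there are $t$ of them on each side. The induced bijection maps singleton classes of $\DR$ bijectively onto singleton classes of $\mathcal D(S)$, because it preserves quotient degree.

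\textbf{Stage 4: reading off the field orders.} Since the bijection preserves class sizes, and $|X_{\{i\}}|=q_i-1$, the multiset $\{q_i-1\}$ coincides with $\{q_i'-1\}$. Hence the two multisets of field orders agree. Proposition~\ref{prop:structure} then gives $R\cong S$, since both rings are the same product of fields up to reordering.

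\textbf{Main obstacle.} The delicate point is rigor in Stage 1. We must know that the twin relation as defined is an equivalence relation whose classes are intrinsic. Mixing the conditions $N(u)=N(v)$ and $N[u]=N[v]$ can fail transitivity in general graphs. Here Lemma~\ref{lem:twins} already identifies the classes, and any isomorphism carries pairs with $N(u)=N(v)$, or with $N[u]=N[v]$, to pairs of the same kind. So the partition into support classes, being the transitive closure of the twin relation, is preserved. We will also check that the isomorphism respects adjacency between classes, so that quotient degrees are indeed preserved. This holds because adjacency between distinct classes is all-or-nothing by Corollary~\ref{cor:simplified}.

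Nothing further is needed: the singleton sizes alone determine the $q_i$. The one thing to verify is that no vertices were deleted as isolated. By Theorem~\ref{thm:C}, $\DR$ has no isolated vertices for $t\ge3$, so every class $X_A$ is genuinely present in the graph.
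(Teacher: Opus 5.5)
Your proposal is correct and follows the paper's proof essentially step for step: you count the $2^t-2$ twin classes to get $t=t'$, use Lemma~\ref{lem:degree-size} to identify the singleton classes as the minimum-degree vertices of the twin-class quotient, and read off the multiset $\{q_i-1\}$ from their sizes before invoking Proposition~\ref{prop:structure}. One small correction to your ``main obstacle'' remark: the relation ``$N(u)=N(v)$ or $N[u]=N[v]$'' is in fact transitive in every simple graph, since distinct $u,v,w$ with $N(u)=N(v)$ and $N[v]=N[w]$ would force $w\in N(v)=N(u)$, hence $u\in N[w]=N[v]$, hence $u\in N(v)=N(u)$, which is a loop. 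Your transitive-closure workaround is therefore unnecessary, though harmless.
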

\begin{proof}
By Lemma~\ref{lem:twins}, $\DR$ has exactly $2^t-2$ twin classes, an isomorphism invariant strictly increasing in $t$; so $\DR\cong\mathcal D(S)$ forces $t=t'$. By Lemma~\ref{lem:degree-size}, the degree of a twin class in the twin-class quotient graph strictly increases with $|A|$, so the twin classes of minimum degree are exactly the $t$ singleton classes $X_{\{1\}},\dots,X_{\{t\}}$, and this identification uses only the abstract graph $\DR$, with no reference to the ring that produced it. Reading off their sizes $|X_{\{i\}}|=q_i-1$ recovers the multiset $\{q_1,\dots,q_t\}$ as a graph isomorphism invariant. The same argument applied to $S$ recovers $\{q_1',\dots,q_t'\}$, and since $\DR\cong\mathcal D(S)$ these multisets coincide. By Proposition~\ref{prop:structure}, $R\cong S$.
\end{proof}
\section{Concluding Remarks and Open Problems}\label{sec:conclusion}

We have given a complete structural theory of the weak zero-divisor difference graph $\DR$ for finite reduced rings. We found an exact adjacency criterion that places $\DR$ within a three-stage refinement of $\GR$ alongside Badawi's annihilator graph, and used it to determine connectivity, diameter, and girth for every $t\ge3$, a complete perfectness dichotomy with an elementary combinatorial proof at $t=4$, exact clique numbers at $t=3$ and $t=4$ together with a detailed account of the obstructions to a general-$t$ formula, a full reconstruction theorem showing $\DR$ recovers the ring itself, and closed forms for the degree sequence, minimum degree, domination number, and independence and vertex cover numbers.

The reduced-ring hypothesis is used throughout, and we briefly indicate why it cannot simply be dropped. For a general finite Artinian ring $R\cong R_1\times\cdots\times R_t$ with local factors $R_i$, Lemma~\ref{lem:ann} relies on each $\mathbb F_{q_i}$ being a field, giving only two annihilator states, unit or zero; this already fails for $R_i=\mathbb Z_4$, where $\Ann(2)=(2)$ is a proper nonzero ideal. For finite chain rings, $\Ann_{R_i}(x_i)=(\pi_i^{k_i-v_i(x_i)})$ in terms of the valuation $v_i$, and this genuinely finer, valuation-dependent condition replaces the two-state criterion of Theorem~\ref{thm:A}. It reduces to a tractable three-state case when the nilpotency index is $k_i=2$, but not beyond.

\end{document}